\newtheorem{theorem}{Theorem}[section]
\newtheorem{lemma}[theorem]{Lemma}
\newtheorem{proposition}[theorem]{Proposition}
\newtheorem{corollary}[theorem]{Corollary}
\theoremstyle{definition}
\newtheorem{example}[theorem]{Example}
\newtheorem{remark}[theorem]{Remark}
\newtheorem{definition}[theorem]{Definition}
\newtheorem*{definition*}{Definition}
\newtheorem*{theorem*}{Theorem}
\newtheorem*{proposition*}{Proposition}
\newtheorem*{corollary*}{Corollary}
\renewcommand{\S}{\mathfrak{S}}
\newcommand{\Z}{\mathbb{Z}}
\author{Yibo Gao}
\address{Department of Mathematics, Massachusetts Institute of Technology, \mbox{Cambridge, MA 02139}}
\email{\href{mailto:gaoyibo@mit.edu}{{\tt gaoyibo@mit.edu}}}
\author{Junyao Peng}
\address{Department of Mathematics, Massachusetts Institute of Technology, \mbox{Cambridge, MA 02139}}
\email{\href{mailto:junyaop@mit.edu}{{\tt junyaop@mit.edu}}}
\begin{document}
\title{Counting Shellings of Complete Bipartite Graphs and Trees}
\date{\today}

\begin{abstract}
A shelling of a graph, viewed as an abstract simplicial complex that is pure of dimension 1, is an ordering of its edges such that every edge is adjacent to some other edges appeared previously. In this paper, we focus on complete bipartite graphs and trees. For complete bipartite graphs, we obtain an exact formula for their shelling numbers. And for trees, we relate their shelling numbers to linear extensions of tree posets and bound shelling numbers using vertex degrees and diameter.
\end{abstract}
\maketitle

\section{Introduction}
In combinatorial topology, shelling of a simplicial complex is a very useful and important notion that has been well-studied. 
\begin{definition}\label{def:shellingcomplex}
An (abstract) simplicial complex $\Delta$ is called \textit{pure} if all of its maximal simplicies have the same dimension. Given a finite (or countably infinite) simplicial complex $\Delta$ that is pure of dimension $d$, a \textit{shelling} is a total ordering of its maximal simplicies $C_1,C_2,\ldots$ such that for every $k>1$, $C_k\cap\left(\bigcup_{i=1}^{k-1}C_i\right)$ is pure of dimension $d-1$. A simplicial complex that admits a shelling is called \textit{shellable}.
\end{definition}
Shellable complexes enjoy many strong algebraic and topological properties. For example, a shellable complex is homotopy equivalent to a wedge sum of spheres, thus its homology groups (over $\Z$) are torsion-free. The study of shellability in its combinatorial aspects has turned out to be very fruitful as well. The arguably earliest notable result that polytopes are shellable is due to Brugesser and Mani (Section 8 of \cite{ziegler2012lectures}). Later on, Bj\" orner and Wachs developed theories on lexicographic
shellability (Section 12 of \cite{kozlov2007combinatorial}). In particular, they introduced powerful notions of $EL$- and $CL$-shellability to study graded posets whose order complexes are shellable. In a recent work, Goaoc et al. \cite{goaoc2017shellability} proved that testing shellability is NP-complete.

A great deal of work has been done in understanding properties of shellable complexes and identifying classes of shellable complexes, however, little work has been done in counting the number of shellings of a fixed simplicial complex. One result on the number of shellings of the complete graphs is given by Stanley \cite{MO297411}. It is generally believed that if a simplicial complex is shellable, then it usually admits a lot of shellings, but no precise arguments have been given.

In this paper, we investigate the problem of counting shellings, aiming to start a new line of research. We restrict our attention to finite simplicial complexes that are pure of dimension 1, namely, undirected graphs, where interesting combinatorial arguments are already taking place. Let's first reformulate Definition~\ref{def:shellingcomplex} in the language of graph theory.
\begin{definition}[Graph Shelling]\label{def:shellinggraph}
Given an undirected graph $G=(V,E)$, where $V$ is the vertex set of $G$ and $E$ is the edge set of $G$, a \textit{shelling} of $G$ is a total ordering of the edge set $\sigma\in\S_E$, where $\S$ stands for symmetric group, such that $\sigma(1),\ldots,\sigma(k)$ form a connected subgraph of $G$ for all $k=1,\ldots,|E|$.
\end{definition}
Throughout this paper, we will use $F(G)$ to denote the number of shellings of a graph $G$.

Clearly, a graph admits a shelling if and only if it is connected, which is equivalent to $F(G)>0$. A few results are already known.
\begin{theorem}[\cite{MO297411}]
Let $K_n$ be the complete graph on $n$ vertices. Then
$$F(K_n)=\frac{2^{n-2}}{C_{n-1}}\binom{n}{2}!$$
where $C_{n-1}=\binom{2n-2}{n-1}/n$ is the $(n-1)^{th}$ Catalan number.
\end{theorem}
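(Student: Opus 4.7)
The plan is to classify each shelling of $K_n$ by its ``entry structure'' and reduce the count to a linear-extension count on a small tree poset.

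First, I would assign to each shelling $\sigma$ the ordering $(v_1, v_2, \ldots, v_n)$ in which vertices first appear: $v_1$ and $v_2$ are the endpoints of $\sigma(1)$ (taken as an ordered pair, at the cost of a factor of $2$ to correct at the end), and for $i \geq 3$, $v_i$ is the $i$-th distinct vertex introduced. For each $i \geq 3$, let $j(i) < i$ be the unique index such that $t_i := \{v_{j(i)}, v_i\}$ is the edge of $\sigma$ that first contains $v_i$; set $t_2 := \sigma(1)$. The edges $t_2, \ldots, t_n$ form a spanning tree of $K_n$, and the shelling condition is exactly that every non-tree edge $\{v_a, v_b\}$ with $a < b$ appears at a time strictly after $t_b$.

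After fixing $(v_1, \ldots, v_n, j)$, the shellings realizing this data are in bijection with the linear extensions of the poset $P$ on $E(K_n)$ whose only relations are the chain $t_2 \prec t_3 \prec \cdots \prec t_n$ together with $t_b \prec e$ for each non-tree edge $e$ whose larger endpoint is $v_b$. The Hasse diagram of $P$ is a rooted tree (the chain $t_2 \to t_3 \to \cdots \to t_n$ with $i-2$ leaves hanging off each $t_i$), so the hook-length formula for tree posets gives the number of linear extensions as $\binom{n}{2}!/\prod_v h(v)$, where $h(v)$ is the size of the subtree at $v$.

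The central computational step is to show $\prod_{v \in P} h(v) = (2n-2)!/2^{n-1}$. A direct count yields $h(t_b) = \tfrac{1}{2}(n-b+1)(n+b-2)$ while the leaves contribute $1$, and the two resulting products over $b$ telescope into $(n-1)!$ and $(2n-2)!/(n-1)!$, establishing the identity. Importantly, the answer depends on neither the vertex sequence nor the parent function; this independence is what makes the final count so clean.

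Finally, summing over the $n!$ ordered vertex sequences and the $\prod_{i=3}^n(i-1) = (n-1)!$ parent functions, and dividing by $2$ for the $v_1 \leftrightarrow v_2$ overcount, gives
\[
F(K_n) \;=\; \frac{n!\,(n-1)!}{2} \cdot \frac{2^{n-1}\binom{n}{2}!}{(2n-2)!} \;=\; \frac{2^{n-2}}{C_{n-1}}\binom{n}{2}!,
\]
after using $C_{n-1} = (2n-2)!/\bigl(n!(n-1)!\bigr)$. The main technical hurdle is the hook-length product computation; everything else is bookkeeping.
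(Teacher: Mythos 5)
Your proposal is correct, and all the key computations check out: the poset $P$ attached to a fixed data pair (ordered appearance sequence, parent function) is indeed a rooted-tree poset with unique minimum $t_2$, the chain $t_2\prec\cdots\prec t_n$, and exactly $b-2$ leaves covering $t_b$; the hook (principal filter) sizes are $h(t_b)=\sum_{m=b}^{n}(m-1)=\tfrac{1}{2}(n-b+1)(n+b-2)$, giving $\prod_x h(x)=(2n-2)!/2^{n-1}$ independently of the data; and $\tfrac12\,n!\,(n-1)!\cdot 2^{n-1}\binom{n}{2}!/(2n-2)!=\frac{2^{n-2}}{C_{n-1}}\binom{n}{2}!$ since $C_{n-1}=(2n-2)!/\bigl(n!\,(n-1)!\bigr)$. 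Note that the paper does not prove this theorem at all — it is quoted from the MathOverflow reference — so there is no in-paper argument to compare against; but your route closely parallels the strategy the paper uses for $K_{m,n}$ in Lemma~\ref{lem:bipartiteStanley} (fix the order of first appearance of vertices, choose the first edge incident to each new vertex, then count interleavings of the remaining edges), and in fact your hook-length step is the paper's Proposition~\ref{prop:rootedTreeCounting} in poset form. The pleasant feature you exploit, that the hook product is independent of both the vertex sequence and the parent function, is exactly what fails for $K_{m,n}$ and forces the paper's heavy summation in Lemmas~\ref{lem:story} and~\ref{lem:binomialComputation}. One small wording caveat: the sentence ``the shelling condition is exactly that every non-tree edge $\{v_a,v_b\}$ appears after $t_b$'' is not literally the shelling condition (connectivity only demands that the edge come after $v_a$ first appears); the constraint ``after $t_b$'' is shelling \emph{plus} consistency with $t_b$ being the first edge at $v_b$. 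Your subsequent bijection statement is the correct one, so this is a matter of phrasing, not a gap; you would also want to cite or prove the hook-length formula for forest posets (Knuth/Stanley), which is standard.
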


As an overview for the paper, in Section~\ref{sec:cbg}, we will give an explicit formula for the number of shellings of complete bipartite graphs, resolving a MathOverflow question \cite{MO297385} posed by Sebastien Palcoux; in Section~\ref{sec:trees}, we will provide exact formula for the number of shellings of trees, which is closely related to the number of linear extensions of posets, and utilize the formula to obtain some upper and lower bounds for them.
\section{Complete Bipartite Graphs}\label{sec:cbg}
Denote $K_{m,n}$ as the complete bipartite graph with part sizes $m$ and $n$. The following is our main theorem.
\begin{theorem}\label{thm:bipartite}
For all positive integers $m$ and $n$, the number of shellings of $K_{m,n}$ is given by
$$F(K_{m,n})=\frac{m!n!(mn)!}{(m+n-1)!}.$$
\end{theorem}
\begin{remark}
Theorem~\ref{thm:bipartite} can also be rephrased as $F(K_{m,n})=(mn)!\cdot(m+n)/{m+n\choose n}$. In other words, there is a $(m+n)/{m+n\choose n}$ probability that a uniformly random edge ordering is a shelling order.
\end{remark}
The formula in Theorem~\ref{thm:bipartite} is conjectured in the MathOverflow post \cite{MO297385}. Partial progress has been made: Lemma~\ref{lem:bipartiteStanley}, given by Richard Stanley \cite{MO297385}, serves as an important tool for our computation.

\begin{lemma}\label{lem:bipartiteStanley}
$F(K_{m,n})$ is equal to the following expression:
$$m!n!(mn-1)!\sum_{\alpha}\frac{b_{1}b_{2}\cdots b_{m+n-2}}{b_{m+n-2}(b_{m+n-2}+b_{m+n-3})\cdots (b_{m+n-2}+b_{m+n-3}+\ldots+b_1)},$$
where the sum is over all sequences $\alpha = (a_1,a_2,\ldots,a_{m+n-2})$ of $(m-1)$ 0's and $(n-1)$ 1's, and
$$b_{i} = 1 + |\{1\leq j\leq i: a_j \neq a_i\}|.$$
\end{lemma}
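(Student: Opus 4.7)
The plan is to decompose each shelling of $K_{m,n}$ into a vertex introduction order together with an ordering of the remaining edges, and then count the resulting linear extensions via the hook length formula for rooted trees.

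Every edge in a shelling is either an \emph{introduction edge}, adding a new vertex to the current connected subgraph, or an \emph{internal edge}, whose endpoints are already present. The first edge introduces two vertices and every subsequent introduction edge introduces exactly one, so the $m+n-1$ introduction edges form a spanning tree and the remaining $(m-1)(n-1)$ edges are internal. From a shelling I would extract the following data: (i) the orders in which the $m$ left-vertices and $n$ right-vertices first appear, contributing a factor of $m!\,n!$; (ii) the interleaving $\alpha$ of the $m-1$ non-initial left-introductions with the $n-1$ non-initial right-introductions, precisely the $\alpha$ of the lemma; and (iii) for the $i$-th post-first-edge introduction, the choice of which opposite-side vertex the new vertex attaches to. A direct check identifies $b_i$ with the number of opposite-side vertices present at that moment, so (iii) contributes $\prod_{i=1}^{m+n-2} b_i$ and simultaneously fixes the spanning tree of introduction edges.

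With (i)--(iii) fixed, the remaining task is to count orderings that interleave the $(m-1)(n-1)$ labeled internal edges among the introductions. Each internal edge $e$ has a birth index $B_e$ equal to the larger of the two introduction indices of its endpoints, and the only constraint is that $e$ appear after the introduction $I_{B_e}$; a short count shows exactly $b_k - 1$ internal edges satisfy $B_e = k$. These valid orderings are precisely the linear extensions of the rooted tree poset whose Hasse diagram is the chain $I_0 < I_1 < \cdots < I_{m+n-2}$ with $b_k - 1$ pendant leaves attached above each $I_k$. The tree hook length formula gives this count as $(mn)!/\prod_v h_v$, where each leaf contributes $h_v = 1$, each spine vertex satisfies $h(I_k) = b_k + b_{k+1} + \cdots + b_{m+n-2}$ for $k \ge 1$, and $h(I_0) = mn$ via the identity $\sum_{k=1}^{m+n-2} b_k = mn - 1$. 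The resulting denominator matches that of Stanley's formula exactly; multiplying by $m!\,n! \prod_i b_i$ and summing over $\alpha$ yields the claim.

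The main conceptual step is the observation in the previous paragraph that shellings with fixed introduction order are in bijection with linear extensions of a caterpillar-shaped tree poset, so that the standard hook length formula for rooted trees applies and delivers the precise denominator appearing in the statement. The main bookkeeping hazard is to not conflate $b_k$ (opposite-side vertices present at introduction $k$, which counts both introduction-edge choices in (iii) and is responsible for the numerator factor) with the count $b_k - 1$ of internal edges born at that step (which controls the shape of the poset and hence the denominator).
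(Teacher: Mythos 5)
Your proposal is correct and follows essentially the same route as the paper's proof: the same decomposition into the $m!\,n!$ vertex-appearance orders, the interleaving $\alpha$, and the $\prod_i b_i$ choices of introduction edges, followed by a count of the admissible orderings of the remaining edges. The only difference is cosmetic: where you recognize the residual constraints as a caterpillar-shaped tree poset and invoke the hook length formula to get $(mn-1)!/\prod_{k\geq 1}(b_{m+n-2}+\cdots+b_k)$, the paper obtains the same quantity by a direct telescoping insertion count, which amounts to reproving that formula in this special case.
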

\begin{proof}
Let $\sigma$ be a shelling of $K_{m,n}$. In each part of $K_{m,n}$, consider the order of the appearance of the vertices. Here, we say that vertex $v$ appears in $\sigma$ at time $t$ if $t$ is the first index such that $v\in \sigma(t)$. There are $m!$ ways to choose such order in the part of size $m$ and $n!$ ways in the part of size $n$. Fix the order of vertex appearance in each part to be $(u_0,u_1,\ldots,u_{m-1})$ and $(v_0,v_1,\ldots,v_{n-1})$, respectively.

Consider a fixed order of appearance of all $(m+n)$ vertices $w = w_{-1}w_0\ldots w_{m+n-2}.$ Note that $\sigma(1)$ must be the edge $e_0=(u_0,v_0)$, so $\{w_{-1},w_0\} = \{u_0,v_0\}.$ For $1\leq i\leq m+n-2$, define 
$$a_{i} = 
\begin{cases}
0, & \text{ if } w_{i} = u_j \text{ for some }j, \\
1, & \text{ if } w_{i} = v_k \text{ for some }k,
\end{cases}$$
and 
$$b_i = 1 + |\{1\leq j\leq i: a_j \neq a_i\}|.$$
Now, for each $w_i$ ($i\geq 1$), consider the first edge $e_i$ incident to $w_i$ in $\sigma$. This edge must be of the form $(w_i,w_j)$ where $j<i$ and $w_i, w_j$ are in different parts of $K_{m,n}$. There are $b_{i}$ choices for this edge, where the 1 in the definition of $b_i$ refers to the edge connecting to either $u_0$ or $v_0$. Thus, there are $b_1b_2\cdots b_{m+n-2}$ ways to choose $e_1,e_2,\ldots,e_{m+n-2}.$

Fix these edges $e_0,e_1,\ldots,e_{m+n-2}$. Note that the rest of the $b_{m+n-2}$ edges incident to $w_{m+n-2}$ must appear after $e_{m+n-2}$ in $\sigma$, so there are $(b_{m+n-2}-1)!$ ways to arrange these edges. After making this arrangement, the edges which are incident to $w_{m+n-3}$ and not yet arranged must appear after $e_{m+n-3}$, so there are
$$(b_{m+n-2}+1)(b_{m+n-2}+2)\cdots (b_{m+n-2}+b_{m+n-3}+1) = \frac{(b_{m+n-2}+b_{m+n-3}+1)!}{b_{m+n-2}!}$$
ways to arrange them (since there are already $b_{m+n-2}$ edges arranged after $e_{m+n-3}$). Similarly, for each $i$, after making the arrangement of all edges incident to vertices appearing after $w_i$, there are 
$$\frac{(b_{m+n-2}+b_{m+n-3}+\ldots + b_i + 1)!}{(b_{m+n-2}+b_{m+n-3}+\ldots + b_{i+1})!}$$
ways to arrange all the edges which are incident to $w_i$ and not yet arranged. Therefore, after fixing $e_0,e_1,\ldots,e_{m+n-2}$, the number of shellings is
$$\prod_{i=1}^{m+n-2}\frac{(b_{m+n-2}+\ldots + b_i + 1)!}{(b_{m+n-2}+\ldots + b_{i+1})!} = \frac{(mn-1)!}{b_{m+n-2}(b_{m+n-2}+b_{m+n-3})\cdots (b_{m+n-2}+\ldots+b_1)}.$$

Combining all discussions above, we obtain Lemma~\ref{lem:bipartiteStanley}.
\end{proof}

We will now compute the expression in Lemma~\ref{lem:bipartiteStanley} explicitly to finish the proof of Theorem~\ref{thm:bipartite}.
\begin{proof}[Proof of Theorem~\ref{thm:bipartite}]
For $m,n\in\Z_{\geq0}$, let's define the following sum
$$S(p,q,m,n):=\sum_{\alpha}\frac{b_1b_2\cdots b_{m+n}}{b_{m+n}(b_{m+n}+b_{m+n-1})\cdots(b_{m+n}+\cdots+b_1)},$$
over all sequences $\alpha = (a_1,a_2,\ldots,a_{m+n})$ of $m$ 0's and $n$ 1's, and
$$b_{i} = \begin{cases}
q + |\{1\leq j\leq i: a_j=1\}|,&\quad a_i=0\\
p + |\{1\leq j\leq i: a_j=0\}|,&\quad a_i=1
\end{cases}.$$
Visually (see Figure~\ref{fig:visualS}), the sum in $S(p,q,m,n)$ goes through all lattice paths from $(0,0)$ to $(m,n)$, and $b_i$ is the length of the (horizontal or vertical) strip formed by the $i^{th}$ step of the path with the boundary $x=-q$ and $y=-p$. 
\begin{figure}[h!]
\centering
\begin{tikzpicture}[scale=0.7]
\tikzmath{\k=0.5;}
\draw(0,0)--(1,0)--(1,1)--(3,1)--(3,2)--(5,2)--(5,4);
\node at (0,0) {$\bullet$};
\node at (1,0) {$\bullet$};
\node at (1,1) {$\bullet$};
\node at (2,1) {$\bullet$};
\node at (3,1) {$\bullet$};
\node at (3,2) {$\bullet$};
\node at (4,2) {$\bullet$};
\node at (5,2) {$\bullet$};
\node at (5,3) {$\bullet$};
\node at (5,4) {$\bullet$};
\draw[dashed](-3,-2)--(3,-2);
\draw[dashed](-3,-2)--(-3,2);
\draw[dashed](0,0)--(0,-2);
\draw[dashed](0,0)--(-3,0);
\draw[dashed](1,0)--(1,-2);
\draw[dashed](1,1)--(-3,1);
\draw[dashed](2,1)--(2,-2);
\draw[dashed](3,1)--(3,-2);
\draw[dashed](3,2)--(-3,2);
\node at (0.5,-1) {$b_1$};
\node at (-1,0.5) {$b_2$};
\node at (1.5,-0.5) {$b_3$};
\node at (2.5,-0.5) {$b_4$};
\node at (0,1.5) {$b_5$};
\node at (4,0) {$\cdots$};
\node at (1.5,3) {$\vdots$};

\draw (5+\k,4)--(5+\k,2.2);
\draw (5+\k,1.8)--(5+\k,0);
\draw (5+\k-0.2,0)--(5+\k+0.2,0);
\draw (5+\k-0.2,4)--(5+\k+0.2,4);
\node at (5+\k,2) {$n$};
\draw (0,4+\k)--(2.2,4+\k);
\draw (2.8,4+\k)--(5,4+\k);
\draw (0,4+\k-0.2)--(0,4+\k+0.2);
\draw (5,4+\k-0.2)--(5,4+\k+0.2);
\node at (2.5,4+\k) {$m$};
\draw (-3,-2-\k)--(-1.7,-2-\k);
\draw (-1.3,-2-\k)--(0,-2-\k);
\draw (-3,-2-\k+0.2)--(-3,-2-\k-0.2);
\draw (0,-2-\k+0.2)--(0,-2-\k-0.2);
\node at (-1.5,-2-\k) {$p$};
\draw (-3-\k,0)--(-3-\k,-0.7);
\draw (-3-\k,-1.3)--(-3-\k,-2);
\draw (-3-\k-0.2,0)--(-3-\k+0.2,0);
\draw (-3-\k-0.2,-2)--(-3-\k+0.2,-2);
\node at (-3-\k,-1) {$q$};

\node at (8,3) {$\bullet$};
\draw[->](8,3)--(8,4);
\draw[->](8,3)--(9,3);
\node at (8.3,4) {$1$};
\node at (9,3.3) {$0$};
\end{tikzpicture}
\caption{A visualization for the sum $S(p,q,m,n)$}
\label{fig:visualS}
\end{figure}

By Lemma~\ref{lem:bipartiteStanley}, it suffices to compute $S(1,1,m-1,n-1)$. As a piece of notation, for $n\in\Z_{\geq0}$, write $t^{(n)}$ to denote $t\cdot (t+1)\cdots (t+n-1)$ where $t^{(0)}=1$. We are now going to use induction on $m+n$ to show the following claim:
$$S(p,q,m,n)=\frac{1}{m!n!}\frac{(p+q)^{(m)}(p+q)^{(n)}}{(p+q)^{(m+n)}}.$$
The base case is $m=0$ or $n=0$. If $m=0$, then the sum in $S(p,q,m,n)$ has only one term with $b_1=b_2=\cdots=b_n=q$. We then obtain $S(p,q,0,n)=1/n!$ as desired. The case $n=0$ is similar. Now fix $m,n>0$ and assume that the claim is true for $S(p,q,m',n')$, where $m'+n'<m+n$ and $p,q$ are arbitrary. 

Consider $S(p,q,m,n)$. For a sequence $\alpha=(\alpha_1,\ldots,\alpha_{m+n})$ in the summand, either $\alpha_1=0$ or $\alpha_1=1$. Notice that $b_{m+n}+\cdots+b_1=(m+p)(n+q)-pq=mn+pn+qm$. If $\alpha_1=0$, then $b_1=q$ and $$\frac{b_2b_3\cdots b_{m+n}}{b_{m+n}(b_{m+n}+b_{m+n-1})\cdots(b_{m+n}+\cdots+b_2)}$$
is a summand in $S(p+1,q,m-1,n)$. We can similarly analyze the case $\alpha_1=1$ and obtain the following recursive formula
\begin{align*}
S(p,q,m,n)=&\frac{q}{mn+pn+qm}S(p+1,q,m-1,n)\\
&+\frac{p}{mn+pn+qm}S(p,q+1,m,n-1).
\end{align*}
By the induction hypothesis, the above calculation goes on as follows
\begin{align*}
=&\frac{qS(p+1,q,m-1,n)+pS(p,q+1,m,n-1)}{mn+pn+qm}\\
=&\frac{qm(p+q+1)^{(m-1)}(p+q+1)^{(n)}+pn(p+q+1)^{(m)}(p+q+1)^{(n-1)}}{(mn+pn+qm)m!n!(p+q+1)^{(m+n-1)}}\\
=&\frac{qm(p+q)^{(m)}(p+q+1)^{(n)}+pn(p+q+1)^{(m)}(p+q)^{(n)}}{(mn+pn+qm)m!n!(p+q)^{(m+n)}}\\
=&\frac{(p+q)^{(m)}(p+q)^{(n)}}{m!n!(p+q)^{(m+n)}}\frac{1}{mn+pn+qm}\left(\frac{(qm)(p+q+n)}{p+q}+\frac{(pn)(p+q+m)}{p+q}\right)\\
=&\frac{(p+q)^{(m)}(p+q)^{(n)}}{m!n!(p+q)^{(m+n)}}.
\end{align*}
Therefore, the induction step goes through and the claim is established.

In particular, by Lemma~\ref{lem:bipartiteStanley},
\begin{align*}
F(K_{m,n})=&m!n!(mn-1)!S(1,1,m-1,n-1)\\
=&m!n!(mn-1)!\frac{m!n!}{(m-1)!(n-1)!(m+n-1)!}\\
=&\frac{m!n!(mn)!}{(m+n-1)!}.
\end{align*}
\end{proof}
\begin{remark}
An earlier version of this paper goes from Lemma~\ref{lem:bipartiteStanley} to Theorem~\ref{thm:bipartite} via heavy calculation of algebraic expressions. The current proof presented here is much shorter but a bijective proof would be very desirable.
\end{remark}

\section{Trees}\label{sec:trees}
Trees are one of the most fundamental types of graphs. However, unlike the complete bipartite graph case, there is no simple formula for tree shelling numbers. The goal of this section is to give a relatively easy method to compute the number of shellings of a tree and to provide upper and lower bounds of this number.

In the process, we will see intimate connections between the number of shellings of a tree and the number of linear extensions of tree posets, which is a much better explored subject in the literature.

Throughout this section, let $T$ be a tree with $n$ vertices and $n-1$ edges.

\subsection{Tree Shelling Number Computation}
We first focus on computing the number of shellings of rooted trees, whose definition is given below.

\begin{definition}\label{def:shellingOfRootedTree}
Let $v$ be a vertex of $T$. The rooted tree induced by $T$ and rooted at $v$ is denoted as $T_v$. A \textit{shelling of a rooted tree} $T_v$ is a shelling $\sigma$ of $T$ such that $\sigma(1)$ is an edge incident to $v$.
\end{definition}

The following definitions are used to efficiently describe structures in a (rooted) tree.

\begin{definition}\label{def:parent}
Let $T_v$ be a tree rooted at vertex $v$.  We say a vertex $u$ is a \textit{parent} of vertex $w$ (and $w$ is a \textit{child} of $u$) if $(w,u)$ is an edge and $u$ lies closer to the root than $w$. A \textit{descending path} from $u$ to $w$ in the rooted tree $T_v$ is a structure
$$u-v_1-v_2-\cdots -v_r-w$$
where each vertex is a parent of the subsequent vertex. We say $u$ is an \textit{ancestor} of $w$ (and $w$ is a \textit{descendant} of $u$) if there exists a descending path from $u$ to $w$.
\end{definition}

\begin{definition}\label{def:rootedSubtree}
Let $u,v\in T.$ The (rooted)\textit{ subtree of $T_v$ rooted at $u$}, denoted as $T_v(u)$, is a subgraph of $T$ rooted at $u$ and induced by the set of vertices
$$\{w\in T: w \text{ is a descendant of } u \text{ in }T_v\}.$$
See Figure~\ref{fig:definition} for an example.
\end{definition}
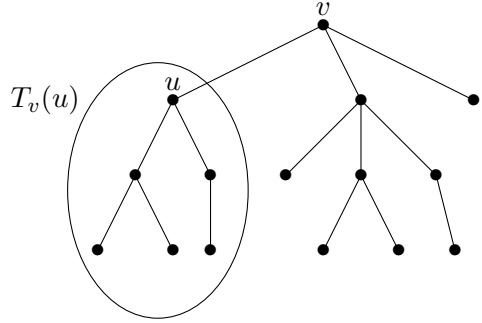
\begin{figure}[h]
\begin{tikzpicture}[scale=1]
\draw (2,-1)--(0,0)--(-2,-1);
\draw (0,0)--(0.5,-1);
\draw (-1.5,-2)--(-2,-1)--(-2.5,-2);
\draw (0.5,-2)--(0.5,-1)--(1.5,-2);
\draw (-0.5,-2)--(0.5,-1);
\draw (-3,-3)--(-2.5,-2)--(-2,-3);
\draw (-1.5,-3)--(-1.5,-2);
\draw (0,-3)--(0.5,-2)--(1,-3);
\draw (1.75,-3)--(1.5,-2);

\node at (0,0){$\bullet$};
\node at (-2,-1){$\bullet$};
\node at (2,-1){$\bullet$};
\node at (0.5,-1){$\bullet$};
\node at (-2.5,-2){$\bullet$};
\node at (-1.5,-2){$\bullet$};
\node at (0.5,-2){$\bullet$};
\node at (-0.5,-2){$\bullet$};
\node at (1.5,-2){$\bullet$};
\node at (-3,-3){$\bullet$};
\node at (-2,-3){$\bullet$};
\node at (-1.5,-3){$\bullet$};
\node at (1,-3){$\bullet$};
\node at (0,-3){$\bullet$};
\node at (1.75,-3){$\bullet$};

\draw (0,0) node[above]{$v$};
\draw (-2,-1) node[above]{$u$};
\draw (-2.2,-2.2) ellipse (1.2 and 1.7);
\draw (-3.1,-1) node[left]{$T_v(u)$};
\end{tikzpicture}
\caption{Definition of $T_v(u)$.}
\label{fig:definition}
\end{figure}

For a tree $T$, the edge set of $T$ is denoted as $E(T)$. The vertex set of $T$ is denoted as $V(T)$, or $T$ for simplicity. Accordingly, $|T|$ is the number of vertices in $T$. The same notations are used for rooted trees.

The following proposition provides a way to calculate the number of shellings of a rooted tree $T_v$ based on the size of its rooted subtrees.

\begin{proposition}\label{prop:rootedTreeCounting}
Let $T$ be a tree on $n$ vertices and $v\in V(T)$. Then
$$F(T_v) = \frac{n!}{\prod_{u\in T} |T_v(u)|}.$$
\end{proposition}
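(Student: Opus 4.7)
The plan is to induct on $n = |V(T)|$; the case $n = 1$ is trivial since $F(T_v) = 1$ and $|T_v(v)| = 1$. For the inductive step, I would condition on the last edge of a shelling. Since the first $n-2$ edges span a connected subtree on $n-1$ vertices that must contain $v$, the last edge introduces the one remaining vertex, and this vertex cannot be $v$. Hence it must be a leaf $\ell$ of the rooted tree $T_v$, and the last edge is the unique edge from $\ell$ to its parent. Conversely, any shelling of $T_v \setminus \ell$ extends uniquely to a shelling of $T_v$ by appending that edge, giving the recursion
\begin{equation*}
F(T_v) \;=\; \sum_{\ell \text{ leaf of } T_v} F(T_v \setminus \ell).
\end{equation*}

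Next, I would apply the inductive hypothesis to each $F(T_v \setminus \ell)$. The key observation is that deleting $\ell$ decreases $|T_v(u)|$ by one for each proper ancestor $u$ of $\ell$ and leaves it unchanged for every other vertex $u \neq \ell$; combined with $|T_v(\ell)| = 1$, this yields
\begin{equation*}
F(T_v) \;=\; \frac{(n-1)!}{\prod_{u \in T_v} |T_v(u)|} \sum_{\ell \text{ leaf of } T_v} \prod_{u \in A(\ell)} \frac{|T_v(u)|}{|T_v(u)| - 1},
\end{equation*}
where $A(\ell)$ denotes the set of proper ancestors of $\ell$ in $T_v$. It therefore suffices to prove the auxiliary identity
\begin{equation*}
\sum_{\ell \text{ leaf of } T_v} \prod_{u \in A(\ell)} \frac{|T_v(u)|}{|T_v(u)|-1} \;=\; n.
\end{equation*}

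Finally, I would prove this identity by a parallel induction on $n$. If $v$ has children $c_1, \ldots, c_k$ with $|T_v(c_i)| = n_i$, so $\sum_i n_i = n - 1$, then every leaf $\ell$ of $T_v$ lies in exactly one $T_v(c_i)$, and since $v \in A(\ell)$ each summand acquires a common factor $\frac{n}{n-1}$. The remaining product ranges over ancestors of $\ell$ inside $T_v(c_i)$, where the relevant subtree sizes agree with those computed in $T_v$, so by the inductive hypothesis applied to $T_v(c_i)$ its sum over leaves in $T_v(c_i)$ equals $n_i$. Summing over $i$ then gives $\tfrac{n}{n-1}\sum_i n_i = n$, closing the induction. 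The arguments involve no deep obstacle; the main subtlety lies in the bookkeeping of how subtree sizes change under leaf removal and in the clean setup of the parallel induction for the auxiliary identity.
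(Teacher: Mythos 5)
Your argument is correct, but it takes a genuinely different route from the paper. The paper decomposes at the root: writing $T^{(i)}$ for the subtree at each child $u_i$ of $v$ together with the edge $(u_i,v)$, it observes that shellings of $T_v$ are exactly shuffles of shellings of the rooted branches $T^{(i)}_v$, so $F(T_v)=\binom{|E(T)|}{|E(T^{(1)})|,\ldots,|E(T^{(r)})|}\prod_i F(T_v^{(i)})$, and a single induction then makes the factorials telescope immediately. You instead peel off the \emph{last} edge: the recursion $F(T_v)=\sum_{\ell}F\bigl((T\setminus\ell)_v\bigr)$ over leaves $\ell$ of $T_v$ (the standard ``remove a maximal element'' recursion for linear extensions of the forest poset), which after applying the inductive hypothesis requires the auxiliary identity $\sum_{\ell}\prod_{u\in A(\ell)}\frac{|T_v(u)|}{|T_v(u)|-1}=n$, proved by a second induction that, in effect, does decompose at the root. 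Both proofs are elementary and of comparable length; the paper's buys a one-step telescoping with no side lemma, while yours isolates a clean weighted leaf-count identity (equivalently, that the probabilities of which pendant edge comes last in a uniform shelling sum to $1$) and mirrors the classical proof of the hook-length formula for forests. Your bookkeeping of how the $|T_v(u)|$ change under leaf deletion, and the reduction of the identity to the child subtrees, are both right. One trivial touch-up: the sentence ``the first $n-2$ edges span a connected subtree on $n-1$ vertices that must contain $v$'' is vacuous when $n=2$, so either start the induction at $n=2$ (where the formula is checked directly) or note that the recursion holds trivially there.
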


Proposition~\ref{prop:rootedTreeCounting} is, in fact, Knuth's hook length formula \cite{knuth1973art} for the number of linear extensions of a partially ordered set whose Hasse diagram is the rooted tree $T_v$. Recall that a linear extension of a poset $P$ on $n$ elements is a bijection $\beta:P\rightarrow\{1,2,\ldots,n\}$ such that $\beta(x)<\beta(y)$ if $x<y$ in $P$. Specifically, shellings of $T_v$ are in simple bijection with linear extensions of the poset $P_v$ where $u<w$ if $u$ is a descendent of $w$ in $T_v$: given a shelling $\alpha$ of $T_v$, we can construct a linear extension $\beta:P\rightarrow\{1,\ldots,n\}$ such that $\beta(v)=n$ and $\beta^{-1}(n-1),\ldots,\beta^{-1}(1)$ is the order of appearances of vertices $P_v\setminus\{v\}$ provided by $\alpha$. 

Because of this correspondence, we omit the proof of Proposition~\ref{prop:rootedTreeCounting}, which is in the literature and is straightforward via induction. Knuth's hook length formula for trees has multiple $q$-analogues given by Bj\"{o}rner and Wachs \cite{bjorner1989qhook} and a further multivariate generalization by Hivert and Reiner \cite{hivert2013multivariate}. Viewing such linear extensions of tree posets as intervals in the weak Bruhat orders, we see that those intervals are called ``forest quotients in the symmetric group" in the language of generalized quotients developed by Bj\"{o}rner and Wachs \cite{bjorner1988generalized} and that they are nice intervals to provide splittings of the Coxeter groups \cite{gaetz2019sep2,gaetz2020sep1}.

For our purposes to compute $F(T)$, however, the root of the tree is constantly moving. Thus, we need new tools for our estimation.

\begin{corollary}\label{cor:rootedTreeRatio}
Suppose that $(u,v)$ is an edge of $T$, then
$$\frac{F(T_v)}{F(T_u)} = \frac{|T_u(v)|}{|T_v(u)|} = \frac{|T_u(v)|}{n-|T_u(v)|}.$$
\end{corollary}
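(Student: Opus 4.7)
The plan is to apply Proposition~\ref{prop:rootedTreeCounting} twice, once for each of $F(T_u)$ and $F(T_v)$, take the ratio, and observe that almost all factors cancel term by term. Writing the ratio out gives
$$\frac{F(T_v)}{F(T_u)} = \prod_{w \in T} \frac{|T_u(w)|}{|T_v(w)|},$$
so the only question is which vertices contribute a nontrivial factor.

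The key geometric observation is that rerooting across a single edge only changes the rooted subtree at the two endpoints of that edge. Deleting $(u,v)$ splits $T$ into two components; call them $A$ (the component containing $u$, which is $T_v(u)$) and $B$ (the component containing $v$, which is $T_u(v)$). For any $w \in B \setminus \{v\}$, the set of descendants of $w$ is the same in $T_u$ as in $T_v$, because the orientation on the $B$-side of the cut edge is determined purely within $B$ once we know $w$ points away from $v$ toward $w$'s own descendants. Symmetrically, $|T_u(w)| = |T_v(w)|$ for $w \in A \setminus \{u\}$. Hence every factor with $w \notin \{u,v\}$ cancels.

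The two surviving factors give
$$\frac{F(T_v)}{F(T_u)} = \frac{|T_u(u)|}{|T_v(u)|}\cdot\frac{|T_u(v)|}{|T_v(v)|} = \frac{n}{|T_v(u)|}\cdot\frac{|T_u(v)|}{n} = \frac{|T_u(v)|}{|T_v(u)|},$$
and the second equality in the corollary follows because $A$ and $B$ partition $V(T)$, so $|T_v(u)| + |T_u(v)| = n$. The only point that requires a moment of care is the cancellation claim above, namely that $T_u(w) = T_v(w)$ as vertex sets whenever $w$ lies strictly on its own side of the edge $(u,v)$; but this is a direct unwinding of the descending-path definition and poses no real obstacle.
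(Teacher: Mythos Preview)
Your proof is correct and follows essentially the same approach as the paper: apply Proposition~\ref{prop:rootedTreeCounting} to both $F(T_u)$ and $F(T_v)$, take the ratio, and observe that $T_u(w)=T_v(w)$ for all $w\neq u,v$ so that only the factors at $u$ and $v$ survive. The paper states the cancellation in a single line, whereas you spell out the component argument across the edge $(u,v)$, but the substance is identical.
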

\begin{proof}
For any vertex $w\neq u,v$, $T_u(w)$ and $T_v(w)$ are the same subtree of $T_v$. Therefore, by Proposition~\ref{prop:rootedTreeCounting},
\begin{align*}
\frac{F(T_v)}{F(T_u)} &= \frac{\prod_{w\in T} |T_u(w)|}{\prod_{w\in T} |T_v(w)|} = \frac{|T_u(v)|\cdot |T_u(u)|}{|T_v(u)|\cdot |T_v(v)|}  \\
&= \frac{|T_u(v)|}{|T_v(u)|}  = \frac{|T_u(v)|}{n-|T_u(v)|}.
\end{align*}
\end{proof}

Corollary ~\ref{cor:rootedTreeRatio} establishes a simple relationship between the number of shellings of $T$ rooted at adjacent vertices. In this way, by only calculating $F(T_v)$ for a single vertex $v$, one can quickly derive $F(T_u)$ for all $u\in T.$ For example, suppose $T$ is a path of length $n-1$, as shown in figure~\ref{fig:path}. Then $F(T_{v_1}) = 1$, and $$F(T_{v_{i+1}}) = \frac{|T_{v_i}(v_{i+1})|}{n-|T_{v_i}(v_{i+1})|}F(T_{v_i}) = \frac{n-i}{i}F(T_{v_i})$$ by Corollary~\ref{cor:rootedTreeRatio}. This gives $F(T_{v_i}) = \binom{n-1}{i-1}$ for all $i=1,2,\ldots,n.$

\begin{figure}[h]
\begin{tikzpicture}[scale=1]
\draw(0,0)--(4,0);
\draw(5,0)--(7,0);

\node at (0,0){$\bullet$};
\node at (1,0){$\bullet$};
\node at (2,0){$\bullet$};
\node at (3,0){$\bullet$};

\node at (7,0){$\bullet$};
\node at (6,0){$\bullet$};

\node[draw=none] (ellipsis2) at (4.5,0) {$\cdots$};

\draw (0,0) node[below]{$v_1$};
\draw (1,0) node[below]{$v_2$};
\draw (2,0) node[below]{$v_3$};
\draw (3,0) node[below]{$v_4$};
\draw (6,0) node[below]{$v_{n-1}$};
\draw (7,0) node[below]{$v_n$};
\end{tikzpicture}
\caption{A path of length $n-1$. The shelling number is $2^{n-2}$.}
\label{fig:path}
\end{figure}
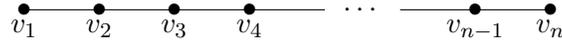

Finally, the following proposition relates the number of shellings of $T$ with that of its rooted trees.

\begin{proposition}\label{prop:treeCounting}
$$F(T) = \frac{1}{2}\sum_{v\in T} F(T_v).$$
\end{proposition}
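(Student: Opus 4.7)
The plan is to prove the identity by a straightforward double-counting argument based on the endpoints of the first edge $\sigma(1)$ in a shelling.

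First I would unwind Definition~\ref{def:shellingOfRootedTree}: a shelling of $T_v$ is nothing but a shelling $\sigma$ of the underlying tree $T$ subject to the single extra condition that $v \in \sigma(1)$. Hence
\[
F(T_v) \;=\; \bigl|\{\sigma \in \mathfrak{S}_{E(T)} : \sigma \text{ is a shelling of } T \text{ and } v \in \sigma(1)\}\bigr|.
\]
Summing this over all $v \in T$ and swapping the order of summation gives
\[
\sum_{v \in T} F(T_v) \;=\; \sum_{\sigma} \bigl|\{v \in T : v \in \sigma(1)\}\bigr|,
\]
where the outer sum runs over all shellings $\sigma$ of $T$.

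The key observation is that $\sigma(1)$ is a single edge of $T$, and every edge has exactly two distinct endpoints. Therefore $|\{v \in T : v \in \sigma(1)\}| = 2$ for every shelling $\sigma$, so the right-hand side equals $2 F(T)$, which yields the claimed identity after dividing by $2$.

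There is essentially no obstacle: the only thing to be careful about is confirming that the definition of a rooted shelling really is a reindexing (by the endpoints of the first edge) of the set of shellings of $T$, with no further compatibility condition hidden in Definition~\ref{def:shellingOfRootedTree}. Once that is clear, the argument is a one-line interchange of summation.
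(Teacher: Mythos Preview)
Your proposal is correct and follows exactly the same double-counting idea as the paper: each shelling of $T$ with first edge $(u,v)$ is counted once in $F(T_u)$ and once in $F(T_v)$, so summing $F(T_v)$ over all vertices counts every shelling of $T$ twice. The paper states this in a single sentence, while you spell out the summation interchange explicitly, but the argument is identical.
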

\begin{proof}
Note that any shelling of $T$ beginning with edge $(u,v)$ is counted as a shelling of both $T_u$ and $T_v$. Thus, Proposition~\ref{prop:treeCounting} follows.
\end{proof}
Recall that $F(T_v)=e(P_v)$ where $P_v$ is the poset where $u<w$ if $u$ is a descendent of $w$ and $e$ denotes the number of linear extensions of a poset. In other words, Proposition~\ref{prop:treeCounting} is saying that $F(T)=\frac{n}{2}\mathbb{E}[e(P_v)]$ where the root is chosen uniformly at random. Equivalently, in the rest of the paper, we will be providing bounds for the expectation $\mathbb{E}[e(P_v)]$, which is a novel statistic on trees.

\begin{example}\label{ex:path}
By Proposition~\ref{prop:treeCounting} and the discussion under Corollary~\ref{cor:rootedTreeRatio}, the number of shellings of a path of length $n-1$ is
$$\frac{1}{2}\sum_{i=1}^n \binom{n-1}{i-1} = 2^{n-2}.$$
\end{example}

\subsection{Bounds on Tree Shelling Number}
The goal of this section is to give several bounds of tree shelling numbers based on various parameters of a graph, such as vertex degree and diameter (the diameter of a connected graph is the greatest distance between any pairs of vertices). A trivial upper bound is $(n-1)!$, since every shelling is also a permutation of edges. The upper bound is achieved when $T$ is a star, in which every two edges are adjacent to each other.

Here are the main theorems of the section.

\begin{theorem}\label{thm:lowerBoundDegree}
Let $T$ be a tree. For each $v\in V(T)$, let $d(v)$ denote its degree. Then
\begin{equation*}
F(T) \geq \prod_{v\in T} d(v)!.
\end{equation*}
The equality holds if and only if $T$ is a path of length $n-1$ or a star.
\end{theorem}
\begin{remark}
A weaker lower bound $F(T)\geq\prod_{v\in T}\big(d(v)-1\big)!$ is easy to see. However, an extra factor of $\prod_{v\in T}d(v)$ in Theorem~\ref{thm:lowerBoundDegree} requires much more effort.
\end{remark}

\begin{theorem}\label{thm:upperBoundDiameter}
Suppose the diameter of $T$ is $\ell$. When $\ell$ is even,
$$F(T)\leq \frac{2(n-1-\frac{\ell}{2})!}{(\frac{\ell}{2})!}\bigg[\binom{n-2}{\frac{\ell}{2}}+\sum_{i=0}^{\frac{\ell}{2}-1}\binom{n-1}{i}\bigg].$$
When $\ell$ is odd,
$$F(T) \leq \frac{(n-\frac{\ell+3}{2})!}{(\frac{\ell+1}{2})!}\bigg[(n-1-\ell)\binom{n-2}{\frac{\ell-1}{2}}+n\sum_{i=0}^{\frac{\ell-1}{2}}\binom{n-1}{i}\bigg].$$
The equality holds if and only if $T$ has the following form: there exists a path $$v_0 - v_1 - \cdots -v_\ell$$ such that every edge not in this path is adjacent to $v_{\lfloor \frac{\ell}{2}\rfloor}.$
\end{theorem}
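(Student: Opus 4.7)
The plan is to use the formula obtained by combining Propositions~\ref{prop:rootedTreeCounting} and~\ref{prop:treeCounting},
\[
F(T) = \frac{n!}{2}\sum_{v\in T}\frac{1}{\prod_{u\in T}|T_v(u)|},
\]
and to show that this quantity is maximized, over $n$-vertex trees of diameter $\ell$, by the tree $T^{\ast}$ described in the equality clause: a path $u_0-u_1-\cdots-u_\ell$ with the remaining $n-\ell-1$ vertices attached as leaves at $u_m$, where $m=\lfloor\ell/2\rfloor$.

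The reduction to $T^{\ast}$ is the heart of the argument, and I plan to carry it out through two local modifications that preserve the vertex count and the diameter while weakly increasing $F(T)$. Fix a longest path $P\colon u_0-u_1-\cdots-u_\ell$ in $T$. \emph{Flattening:} if some off-path vertex $w$ is not a direct neighbor of $P$, detach a minimal branch at $w$ and reattach its vertices as leaves of the path vertex to which $w$'s subtree is anchored. \emph{Recentering:} if a leaf dangles off $u_j$ with $j\neq m$, move it to $u_m$. Each move changes $|T_v(u)|$ only for the vertices $u$ lying on the "corridor" between the old and new attachment points, so for each rooting $v$ the change in $\prod_u|T_v(u)|$ amounts to replacing a few consecutive integers by a slightly smaller collection. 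Summed over rootings, the required inequality reduces to elementary monotonicity: moving mass closer to the center decreases the subtree-size product on average.

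Once we have reduced to $T^{\ast}$, the explicit computation of $F(T^{\ast})$ proceeds by plugging the subtree sizes of $T^{\ast}$ into Proposition~\ref{prop:rootedTreeCounting}. This yields
\[
F(T^{\ast}_{u_i}) = \frac{(n-m-1)!}{(\ell-m)!}\binom{n-1}{i}\quad\text{for }0\le i\le m,
\]
with a symmetric expression for $i>m$, and $F(T^{\ast}_w) = (n-2)!/(m!(\ell-m)!)$ for each extra leaf $w$. Plugging into Proposition~\ref{prop:treeCounting} and regrouping the partial sums of binomial coefficients via identities such as $\binom{n-1}{i}+\binom{n-1}{i-1}=\binom{n}{i}$ should produce the two stated closed forms for even and odd $\ell$.

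The main obstacle will be the Flattening step. Unlike Recentering, which moves a single leaf and affects only one extra subtree size per rooting, Flattening alters subtree sizes at several vertices simultaneously, and the effect on $\prod_u|T_v(u)|$ depends nontrivially on where $v$ sits relative to the affected corridor. My intended route is to first reduce Flattening to elementary moves that transplant a single leaf one step closer to $P$, then group the rootings $v$ by their position relative to the corridor and use Corollary~\ref{cor:rootedTreeRatio} to collapse the sum over $v$ into a comparison of two products whose ratio is controlled by a single convexity inequality of the form $a(a+k)\le(a+1)(a+k-1)$ for $k\ge 1$.
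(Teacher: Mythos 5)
Your overall skeleton (reduce to the ``all branches at the middle vertex'' tree by local moves that weakly increase $F$, then evaluate that tree via Propositions~\ref{prop:rootedTreeCounting} and~\ref{prop:treeCounting}) matches the shape of the paper's argument, and your rooted values for the extremal tree are correct. But the step you lean on --- that transplanting a \emph{single} leaf toward the center weakly increases $F$ --- is false, so the reduction as you describe it does not go through. Concretely, take $\ell=4$, $n=8$: let $B$ be the path $v_0-v_1-v_2-v_3-v_4$ with three extra leaves at $v_1$, and let $C$ be the same tree with one of those leaves moved to $v_2=v_{\lfloor \ell/2\rfloor}$ (a Recentering move; both trees have diameter $4$). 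Using Propositions~\ref{prop:rootedTreeCounting} and~\ref{prop:treeCounting} one gets $F(B)=\tfrac12(840+504+4\cdot120+168+24)=1008$ but $F(C)=\tfrac12(630+630+3\cdot90+90+210+30)=930<1008$, even though the fully recentered tree has $F=1380$. So single-leaf moves toward $v_m$ are not monotone: when the leaf is pulled away from a heavy cluster, the rooted counts $F(T_v)$ for the many roots in that cluster drop by more than the gains elsewhere. This also shows why your per-rooting description (``replacing a few consecutive integers by a slightly smaller collection'') cannot be right as stated: for roots on the side of the old attachment point the subtree-size product \emph{increases}, and the sign of the aggregate change genuinely depends on the global weight distribution, not on a local convexity inequality like $a(a+k)\le (a+1)(a+k-1)$. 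The same phenomenon threatens your elementary Flattening moves.

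The paper avoids this trap in two ways. First, the flattening reduction is done in one shot, not by the product formula: replacing every edge not incident to $\{v_1,\dots,v_{\ell-1}\}$ by a corresponding leaf edge at the nearest path vertex turns every shelling of $T$ into a shelling of $T'$, giving $F(T)\le F(T')$ directly. Second, the recentering is performed by moving the \emph{entire bundle} of branch edges at the outermost branching vertex $v_i$ one step toward the middle (only when no branch edges lie beyond it), and the inequality $F(T^{(t+1)})\ge F(T^{(t)})$ is proved not with Proposition~\ref{prop:rootedTreeCounting} but with an explicit injection $g$ between sets of shellings, built by reflecting the positions of path edges about $e(i)$ until the image path catches up with the preimage; the equality case then falls out of analyzing when $g$ is surjective, something your plan would still need to address separately. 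If you want to salvage a product-formula route, you must at minimum move whole bundles in the correct order and prove a global (not termwise) inequality on $\sum_v 1/\prod_u|T_v(u)|$, which is substantially harder than the convexity step you sketch. One further caution for your endgame: plugging your (correct) rooted values into Proposition~\ref{prop:treeCounting} yields \emph{half} of the displayed bound (e.g.\ for the star on $4$ vertices, $F=6$ while the even-$\ell$ expression evaluates to $12$), so track the factor $\tfrac12$ carefully when you reconcile your computation with the stated closed forms.
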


Before proving Theorem~\ref{thm:lowerBoundDegree}, it is worth noticing the following inequality, which relates the number of shellings of $T$ and $T_v$.

\begin{lemma}\label{lem:weightInequality}
Let $v$ be a vertex in $T$ and $\ell$ be the length of the longest descending path in $T_v$. Then
$$F(T)\leq \bigg[\sum_{k=0}^{\ell-1}\binom{n-2}{k}\bigg] F(T_v).$$
In particular, $F(T)\leq 2^{n-2}F(T_v).$
\end{lemma}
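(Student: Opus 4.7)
The plan is to construct an explicit map $\Phi$ from shellings of $T$ to shellings of $T_v$ by ``prepending the descending path from $v$'', and then to bound $|\Phi^{-1}(\tau)|$ by $\sum_{k=0}^{\ell-1}\binom{n-2}{k}$ for every $\tau$. Given a shelling $\sigma$ of $T$, I write $\sigma(1) = (x,y)$ with $x$ the endpoint closer to $v$ in $T$, and let $P\colon v = w_0, w_1, \ldots, w_s = x$ be the unique path from $v$ to $x$ in $T$, so $s = d(v,x)$. Define $\Phi(\sigma) = \tau$ by listing the path edges $(w_0, w_1), (w_1, w_2), \ldots, (w_{s-1}, w_s)$ first, then the remaining edges of $\sigma$ in their original relative order (skipping any path edge that appears later in $\sigma$). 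The first step is to verify that $\tau$ is a shelling of $T_v$: its first edge $(v, w_1)$ is incident to $v$, and every prefix of $\tau$ induces a connected subgraph (the first $s$ edges form $P$, and later prefixes are $P$ together with a prefix of $\sigma$, which is attached to $P$ at $x = w_s$).

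The second step is to bound $|\Phi^{-1}(\tau)|$ for each fixed $\tau$. Each $\sigma$ in the fibre carries a well-defined parameter $s = d(v, x_1)$, where $x_1$ is the endpoint of $\sigma(1)$ closer to $v$; from the construction one reads off that $\sigma(1) = \tau(s+1)$ and that $\tau(1), \ldots, \tau(s+1)$ is an initial descending path in $\tau$. Thus $s$ ranges over $\{0, 1, \ldots, s_{\max}(\tau) - 1\}$, where $s_{\max}(\tau) \leq \ell$ is the length of the longest initial descending path in $\tau$. For fixed $s$, since $T$ is a tree, the unique path in $T$ from any $w_j$ (with $0 \leq j < s$) to the seed $\{w_s, y\}$ of $\sigma$'s component is $w_j w_{j+1} \cdots w_s$, so $w_j$ cannot enter the connected component until the edge $e_{j+1} = (w_j, w_{j+1})$ has been added. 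This forces the $s$ path-edges to appear in $\sigma$ in the reverse order $(w_{s-1}, w_s), (w_{s-2}, w_{s-1}), \ldots, (v, w_1)$, while the $n - 2 - s$ non-path edges of $\sigma$ appear in the same relative order as in $\tau$ (namely $\tau(s+2), \ldots, \tau(n-1)$). Hence every preimage with parameter $s$ is determined by a choice of $s$ positions (out of the $n-2$ slots after $\sigma(1)$) at which to place the path-edges, giving at most $\binom{n-2}{s}$ such preimages.

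Summing over $s$ yields $|\Phi^{-1}(\tau)| \leq \sum_{k=0}^{\ell-1}\binom{n-2}{k}$, and therefore
\[
F(T) \;=\; \sum_{\tau} |\Phi^{-1}(\tau)| \;\leq\; F(T_v)\sum_{k=0}^{\ell-1}\binom{n-2}{k},
\]
which is the main inequality. The ``in particular'' assertion $F(T) \leq 2^{n-2} F(T_v)$ follows from $\ell \leq n - 1$, since then $\sum_{k=0}^{\ell-1}\binom{n-2}{k} \leq 2^{n-2}$. The main obstacle I anticipate is the rigidity argument in the second step, which pins down the reverse ordering of the path-edges inside $\sigma$ via the tree-separation property of $T$; once this rigidity is established, the remaining counting reduces to enumerating interleavings and is straightforward bookkeeping.
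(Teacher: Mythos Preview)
Your argument is correct and takes a genuinely different route from the paper's. The paper proceeds indirectly: it writes $F(T)/F(T_v)=\tfrac12\sum_{u}F(T_u)/F(T_v)$ via Proposition~\ref{prop:treeCounting}, then performs a sequence of tree modifications that push side-branches further down the longest descending path from $v$; using Corollary~\ref{cor:rootedTreeRatio} it shows that the sum of weights $\sum_u F(T_u)/F(T_v)$ is non-decreasing under each modification, and finally computes the bound on the terminal tree (all extra vertices attached at $v_{\ell-1}$) using the hook-length formula of Proposition~\ref{prop:rootedTreeCounting}. Your approach instead is a self-contained double count: the map $\Phi$ is well defined because, as you note, each prefix $\tau(1),\ldots,\tau(s+j)$ equals $P\cup\{\sigma(1),\ldots,\sigma(m_j)\}$ (the path edges among the first $m_j$ terms of $\sigma$ being absorbed into $P$), and the fiber bound follows from the tree-separation rigidity forcing the reversed order $e_s,e_{s-1},\ldots,e_1$ of the path edges inside $\sigma$, leaving only the $\binom{n-2}{s}$ interleavings. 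Your method is shorter and avoids the hook-length and weight-ratio machinery entirely; the paper's transformation argument, by contrast, naturally singles out the extremal tree realizing the bound, which is extra information not required for the lemma but suggestive of how tight the inequality is.
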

\begin{proof}
Let $L = v - v_1 - v_2 -\cdots - v_\ell$ be the longest descending path in $T_v$. Perform the following operation on $T$ until it can be performed no longer:
\begin{enumerate}
\item[] Suppose $i\leq \ell-2$ is the first index such that $v_i$ has a child $v'\neq v_{i+1}$ in $T_v$. Remove $T_v(v')$ and attach it on $v_{i+1}$ (i.e., children of $v'$ become children of $v_{i+1}$). Furthermore, remove edge $(v',v_i)$ and add a new edge $(v',v_{i+1}).$ This operation is illustrated in Figure~\ref{fig:pathAdjustmentForWeight}. performed.
\end{enumerate} 

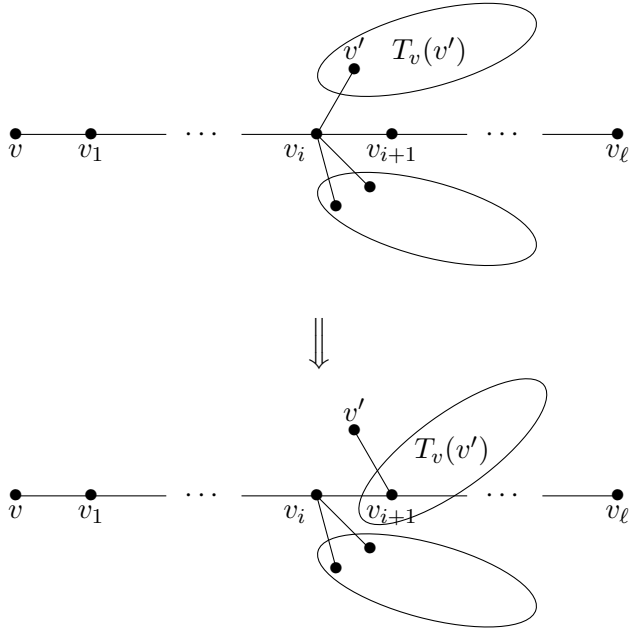
\begin{figure}[h]
\begin{tikzpicture}[scale=1]
\draw(0,0)--(2,0);
\draw(3,0)--(6,0);
\draw(7,0)--(8,0);
\draw(4,0)--(4.5,0.866);
\draw(4,0)--(4.707,-0.707);
\draw(4,0)--(4.259,-0.966);

\node at (0,0){$\bullet$};
\node at (1,0){$\bullet$};
\node at (4,0){$\bullet$};
\node at (5,0){$\bullet$};
\node at (4.5,0.866){$\bullet$};
\node at (4.707,-0.707){$\bullet$};
\node at (4.259,-0.966){$\bullet$};
\node at (8,0){$\bullet$};

\draw[rotate around={-15:(4.5,-0.866)}] (5.5,-0.866) ellipse (1.5 and 0.5);
\draw[rotate around={15:(4.5,0.866)}] (5.5,0.866) ellipse (1.5 and 0.5);

\node[draw=none] (ellipsis2) at (2.5,0) {$\cdots$};
\node[draw=none] (ellipsis2) at (6.5,0) {$\cdots$};

\draw (0,0) node[below]{$v$};
\draw (1,0) node[below]{$v_1$};
\draw (4,0) node[below left]{$v_i$};
\draw (5,0) node[below]{$v_{i+1}$};
\draw (4.5,0.866) node[above]{$v'$};
\draw (5.5,1.1) node[]{$T_{v}(v')$};
\draw (8,0) node[below]{$v_\ell$};
\end{tikzpicture}

$$\Big\Downarrow$$

\begin{tikzpicture}[scale=1]
\draw(0,0)--(2,0);
\draw(3,0)--(6,0);
\draw(7,0)--(8,0);
\draw(5,0)--(4.5,0.866);
\draw(4,0)--(4.707,-0.707);
\draw(4,0)--(4.259,-0.966);

\node at (0,0){$\bullet$};
\node at (1,0){$\bullet$};
\node at (4,0){$\bullet$};
\node at (5,0){$\bullet$};
\node at (4.5,0.866){$\bullet$};
\node at (4.707,-0.707){$\bullet$};
\node at (4.259,-0.966){$\bullet$};
\node at (8,0){$\bullet$};

\draw[rotate around={-15:(4.5,-0.866)}] (5.5,-0.866) ellipse (1.5 and 0.5);
\draw[rotate around={36:(5,0)}] (6,0) ellipse (1.5 and 0.5);

\node[draw=none] (ellipsis2) at (2.5,0) {$\cdots$};
\node[draw=none] (ellipsis2) at (6.5,0) {$\cdots$};

\draw (0,0) node[below]{$v$};
\draw (1,0) node[below]{$v_1$};
\draw (4,0) node[below left]{$v_i$};
\draw (5,0) node[below]{$v_{i+1}$};
\draw (4.5,0.866) node[above]{$v'$};
\draw (5.8,0.6) node[]{$T_{v}(v')$};
\draw (8,0) node[below]{$v_\ell$};
\end{tikzpicture}
\caption{Operation on $T$: moving edges away from the root.}
\label{fig:pathAdjustmentForWeight}
\end{figure}

Such operations preserve the length of the longest descending path in $T_v$ and would eventually stop within a finite number of steps: the sum of distances from all vertices to $v_{\ell}$ strictly decreases after each step. Let $T^{(k)}$ be the tree after $k$th operation. For each $u\in T$, define the weight of $u$ in $T^{(k)}$
$$W_k(u) = \frac{F(T^{(k)}_u)}{F(T^{(k)}_v)}.$$
We claim that the sum of weights of all vertices is non-decreasing after each operation, i.e.
\begin{equation}\label{eq:weightInequality}
\sum_{u\in T}W_k(u) \leq \sum_{u\in T}W_{k+1}(u).
\end{equation}
It suffices to prove the claim for $k=0.$
By Corollary~\ref{cor:rootedTreeRatio}, suppose $(u,w)$ is an edge in $T^{(0)}$, then
\begin{equation}\label{eq:weightRatio}
\frac{W_0(u)}{W_0(w)} = \frac{|T^{(0)}_w(u)|}{|T^{(0)}_u(w)|} = \frac{|T^{(0)}_w(u)|}{n-|T^{(0)}_w(u)|}.
\end{equation}
Therefore, suppose $v-u_1-u_2-\cdots - u_r = u$ is a path in $T^{(0)}_v$, then
\begin{equation*}
W_0(u) = \prod_{j=1}^r \frac{|T^{(0)}_v(u_j)|}{n-|T^{(0)}_v(u_j)|}.
\end{equation*}
Note that for all $u\not\in \{v', v_{i+1}\}$, $|T^{(1)}_v(u)| = |T_v(u)|$. For each $w\not\in T_v(v')\cup T_v(v_{i+1})$, $v',v_{i+1}$ are not on the path from $v$ to $w$, so
\begin{equation*}
W_0(w) = W_1(w).
\end{equation*}
Write $|T_v(v'))|=a, |T_v(v_{i+1})| = b$, then $|T^{(1)}_v(v')| = 1$, $|T^{(1)}_v(v_{i+1})| = |T_v(v')|+|T_v(v_{i+1})| = a+b.$ By (\ref{eq:weightRatio}),
$$W_0(v') = \frac{a}{n-a}W_0(v_i).$$
$$W_0(v_{i+1}) = \frac{b}{n-b}W_0(v_i).$$
$$W_1(v_{i+1}) = \frac{a+b}{n-a-b}W_1(v_i).$$
Since $W_0(v_i) = W_1(v_i)$,
\begin{equation*}
W_0(v')+W_0(v_{i+1})\leq W_1(v_{i+1}).
\end{equation*}
For each $w\in T_v(v')\setminus\{v'\}$, by (\ref{eq:weightRatio}), 
\begin{equation*}
\frac{W_1(w)}{W_1(v_{i+1})} = \frac{W_0(w)}{W_0(v')} \implies W_0(w)\leq W_1(w).
\end{equation*}
Similarly, for each $w\in T_v(v_{i+1})\setminus\{v_{i+1}\}$,
\begin{equation*}
\frac{W_1(w)}{W_1(v_{i+1})} = \frac{W_0(w)}{W_0(v_{i+1})}\implies W_0(w)\leq W_1(w).
\end{equation*}
Therefore, we conclude that
$$\sum_{w\in T} W_0 (w)\leq \sum_{w\in T} W_1(w),$$
and (\ref{eq:weightInequality}) is proved.

Finally, suppose the operation stops after step $M$, then $T^{(M)}$ is the tree where all vertices not in $L$ are incident to $v_{\ell-1}$. Thus, by (\ref{eq:weightRatio}),
\begin{align*}
\sum_{u\in T} W_{M}(u) &= W_M(v)+W_M(v_1)+\cdots+W_M(v_{\ell-1}) + (n-\ell)W_M(v_\ell) \\
&=\sum_{i=0}^{\ell-1}\binom{n-1}{i} + (n-\ell)\frac{\binom{n-1}{\ell-1}}{n-1}\\
&=2\sum_{i=0}^{\ell-1}\binom{n-2}{i},
\end{align*}
where the last equality uses the identity
\[
\binom{n-1}{i} = \binom{n-2}{i} + \binom{n-2}{i-1}
\]
for all $i\geq 1$.
According to equation (\ref{eq:weightInequality}), Proposition~\ref{prop:treeCounting}, 
$$\frac{F(T)}{F(T_v)} = \frac{1}{2}\sum_{u\in T}W_0(u) \leq \frac{1}{2}\sum_{u\in T}W_M(u) = \sum_{i=0}^{\ell-1}\binom{n-2}{i},$$
so the proof is complete.
\end{proof}

Now we are ready to prove Theorem~\ref{thm:lowerBoundDegree} and~\ref{thm:upperBoundDiameter}.
\begin{proof}[Proof of Theorem~\ref{thm:lowerBoundDegree}]
Induct on $|V(T)|$. When $|V(T)| = 2$, $F(T) = 2 = \prod_{v\in T} d(v)!$. In this case, $T$ is both a path and a star, which justifies the case of equality. 

Let $n>2$. Assume the claim holds for all trees with fewer than $n$ vertices and let $T$ be a tree with $n$ vertices. If $T$ is a path of length $n-1$, then by Example~\ref{ex:path},
$$F(T) = 2^{n-2} = \prod_{v\in T} d(v)!,$$
as desired. 

Suppose $T$ is not a path. Then there exists a vertex $v$ of degree $d\geq 3.$ Let $u_1,u_2,\ldots,u_d$ be the vertices adjacent to $v$ and write $|T_v(u_i)| = s_i$ for $i=1,2,\ldots,d$. Assume $s_1\leq s_2\leq \ldots \leq s_d.$ Let $T'$ be the subtree of $T$ obtained by removing all vertices in $T_v(u_1)$ and all edges incident to those vertices. Let $T''$ be the subtree of $T$ induced by edges in $E(T)\setminus E(T').$ See Figure~\ref{fig:theoremLowerBound} for illustration.

\begin{figure}[h]
\begin{tikzpicture}[scale=1]
\draw(0,0)--(1,0);
\draw(0,0)--(0,1);
\draw(0,0)--(0,-1);
\draw(0,0)--(-0.866,-0.5);
\draw(0,0)--(-0.866,0.5);

\node at (0,0){$\bullet$};
\node at (1,0){$\bullet$};
\node at (0,1){$\bullet$};
\node at (0,-1){$\bullet$};
\node at (-0.866,-0.5){$\bullet$};
\node at (-0.866,0.5){$\bullet$};

\draw (1.5,0) ellipse (2 and 0.5);
\draw (-0.7,0) ellipse (1.3 and 2);

\draw (0,0) node[below right]{$v$};
\draw (1,0) node[below]{$u_1$};
\draw (0,1) node[above]{$u_2$};
\draw (-0.866,0.5) node[above]{$u_3$};
\draw (0,-1) node[below]{$u_d$};
\draw (2,0) node[]{$T''$};
\draw (-1.2,0) node[]{$T'$};
\end{tikzpicture}
\caption{Merging a shelling of $T'$ and $T''_v$ to a shelling of $T$.}
\label{fig:theoremLowerBound}
\end{figure}
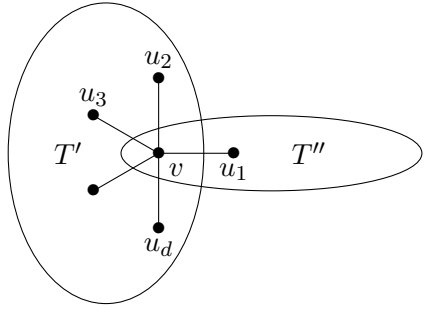

Suppose $\sigma'$ is a shelling of $T'$ and $\sigma''$ a shelling of $T''_v$. Merge $\sigma'$ and $\sigma''$ into a new permutation $\sigma$ of $E(T)$ such that (i) the order of edges in $\sigma'$ and in $\sigma''$ are preserved; (ii) $\sigma''(1) = (v,u_1)$ is not one of the first $s_d$ edges after merge. Note that $\sigma$ must be a shelling of $T$, since at least one of $\{\sigma'(k): 1\leq k\leq s_d\}$ is incident to $v$ and $(v,u_1)$ is adjacent to some previous edges in $\sigma$.

For each fixed $\sigma'$ and $\sigma''$, the number of $\sigma$ constructed by the above merging method is 
$$\binom{n-1-s_d}{|E(T'')|} = \binom{s_1+s_2+\cdots +s_{d-1}}{s_1}.$$
Therefore,
\begin{equation}\label{eq:shellingExtensionNumber}
F(T) \geq F(T')F(T''_v)\binom{s_1+s_2+\cdots+ s_{d-1}}{s_1}.
\end{equation}
Note that the shellings of $T$ constructed above do not include those whose first edge is $(v,u_1)$, so we can replace ``$\geq$" with ``$>$" in (\ref{eq:shellingExtensionNumber}).
Furthermore, by Lemma~\ref{lem:weightInequality},
$$F(T''_v) \geq \frac{F(T'')}{2^{s_1-1}}.$$
Note that $v$ has degree $d-1$ in $T'$ and degree 1 in $T''$, by the induction hypothesis,
$$F(T')F(T'') \geq \frac{1}{d}\prod_{u\in T} d(u)!.$$
Thus, (\ref{eq:shellingExtensionNumber}) implies
$$F(T) > \frac{1}{2^{s_1-1}d}\binom{s_1+s_2+\cdots+s_{d-1}}{s_1}\prod_{u\in T}d(u)!.$$

If for some choices of $v$ with degree $d\geq 3$, $\binom{s_1+s_2+\cdots+s_{d-1}}{s_1} \geq 2^{s_1-1}d$, then $F(T) > \prod_{u\in T} d(u)!$ and equality never holds.

If not, for all choices of $v$, $\binom{s_1+s_2+\cdots+s_{d-1}}{s_1} < 2^{s_1-1}d.$ 
By Lemma~\ref{app:ineq1} in appendix, $s_1 = s_2 =  \cdots = s_{d-1} = 1.$
Therefore, $T$ must be the following type of trees: for every vertex $v$ of degree $d(v) \geq 3$, it connects at least $d(v) - 1$ leaves. If $T$ is a star, then $F(T) = (n-1)!$ is an equality case. If not, $T$ has the form shown in Figure~\ref{fig:Stars}, where $v_0$ and $v_m$ are the only two possible vertices with degree at least 3. 

\begin{figure}[h]
\begin{tikzpicture}[scale=1]
\draw(0,0)--(2,0);
\draw(3,0)--(4,0);
\draw(0,0)--(0,1);
\draw(0,0)--(0,-1);
\draw(0,0)--(-0.707,0.707);
\draw(0,0)--(-1,0);
\draw(4,0)--(4,-1);
\draw(4,0)--(4,1);
\draw(4,0)--(5,0);
\draw(4,0)--(4.707,0.707);

\node[draw=none] (ellipsis2) at (2.5,0) {$\cdots$};
\node at (0,0){$\bullet$};
\node at (1,0){$\bullet$};
\node at (0,1){$\bullet$};
\node at (0,-1){$\bullet$};
\node at (-0.707,0.707){$\bullet$};
\node at (-1,0){$\bullet$};
\node at (4,0){$\bullet$};
\node at (5,0){$\bullet$};
\node at (4,1){$\bullet$};
\node at (4,-1){$\bullet$};
\node at (4.707,0.707){$\bullet$};
\node[rotate around={45:(0,0)}] (ellipsis1) at (-0.55,-0.45) {$\vdots$};
\node[rotate around={45:(0,0)}] (ellipsis1) at (4.5,-0.5) {$\cdots$};

\draw (0,0) node[below right]{$v_0$};
\draw (1,0) node[below]{$v_1$};
\draw (4,0) node[below left]{$v_m$};
\end{tikzpicture}
\caption{The only type of trees that satisfy case 2 condition.}
\label{fig:Stars}
\end{figure}
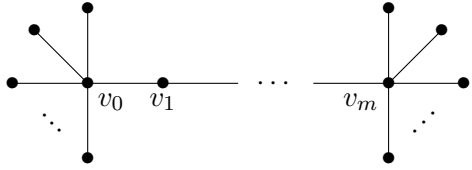 

Suppose $d(v_0) = d_1$, $d(v_m) = d_2$ with $2\leq d_1\leq d_2$. If $m=1$, by Proposition~\ref{prop:rootedTreeCounting},
\[
F(T_v) = \begin{cases}
\frac{(d_1+d_2-1)!}{d_2}, &\text{ if }v = v_0, \\
\frac{(d_1+d_2-1)!}{d_1}, &\text{ if }v = v_1, \\
\frac{(d_1+d_2-2)!}{d_2}, &\text{ if }v \neq v_1, \text{ and }(v,v_0)\in E(T), \\
\frac{(d_1+d_2-2)!}{d_1}, &\text{ if }v \neq v_0, \text{ and }(v,v_1)\in E(T).\\
\end{cases}
\]
Therefore, by Proposition~\ref{prop:rootedTreeCounting},
\begin{align*}
F(T) &= \frac{1}{2}\sum_{v\in T} F(T_v) \\
&= \frac{(d_1+d_2-2)!}{2}\left(\frac{d_1+d_2-1}{d_2}+\frac{d_1+d_2-1}{d_1} + \frac{d_1-1}{d_2}+\frac{d_2-1}{d_1}\right)\\
&=\frac{d_1^2+d_2^2+d_1d_2-d_1-d_2}{d_1d_2}(d_1+d_2-2)! \\
&\geq 2\cdot(d_1+d_2-2)!\\
&\geq d_1!d_2! = \prod_{u\in T} d(u)!,
\end{align*}
where the last two lines are due to Lemma~\ref{app:ineq2} in appendix. 
The equality holds only if $d_1 = d_2 = 2$ and $T$ is a single path.

Now suppose $m\geq 2.$ Consider the following type of shelling of $T$: The first $m-1$ edges of $\sigma$ consist of $\{(v_i,v_{i+1}): 0\leq i\leq m-2\}$. The number of shellings of such type is
$$2^{m-2}(d_1-1)!(d_2-1)!\binom{d_1+d_2-1}{d_2}.$$
Similarly, the number of shellings whose first $m-1$ edges consist of $\{(v_i,v_{i+1}): 1\leq i\leq m-1\}$ is
$$2^{m-2}(d_1-1)!(d_2-1)!\binom{d_1+d_2-1}{d_1}.$$
Thus, we have
\begin{align*}
F(T) &\geq 2^{m-2}(d_1-1)!(d_2-1)!\bigg[\binom{d_1+d_2-1}{d_2}+\binom{d_1+d_2-1}{d_1}\bigg] \\
&= 2^{m-2}(d_1-1)!(d_2-1)!\binom{d_1+d_2}{d_1}
\end{align*}
By Lemma~\ref{app:ineq3} in appendix,
\[
2^{m-2}(d_1-1)!(d_2-1)!\binom{d_1+d_2}{d_1}> 2^{m-1}d_1!d_2! = \prod_{u\in T}d(u)!
\]
unless $d_1=2, d_2\leq 4$, in which cases we have:
\begin{itemize}
\item $(d_1,d_2) = (2,2).$ $F(T) = 2^{n-2} = \prod_{u\in T}d(u)!.$ In this equality case, $T$ is a single path.
\item $(d_1,d_2) = (2,3).$ $F(T) = 2^{n-1}-2 > 3!\cdot 2^{n-4} = \prod_{u\in T}d(u)!.$
\item $(d_1,d_2) = (2,4).$ $F(T) = 6(2^{n-2}-n+1) > 4!\cdot 2^{n-5} = \prod_{u\in T}d(u)!.$
\end{itemize}

By induction, the proof of Theorem~\ref{thm:lowerBoundDegree} is complete.
\end{proof}

\begin{proof}[Proof of Theorem~\ref{thm:upperBoundDiameter}]
Let $v_0 - v_1 -\cdots - v_\ell$ be a longest path in $T$. Firstly, we reduce the problem to the case where all edges in $T$ are incident to $\{v_1,v_2,\ldots,v_{\ell-1}\}$. If not, construct a new tree $T'$ by removing every edge $e$ not incident to $\{v_i:1\leq i\leq \ell-1\}$ and adding a corresponding edge incident to $v_j$, where $v_j$ is the closest vertex from $e$ among $L$. Every shelling of $T$ is still a shelling of $T'$ by considering the corresponding edges. Thus, $F(T)\leq F(T')$ while the longest path remains the same.

Under this assumption, denote $V' = T\setminus \{v_0,v_1,\ldots,v_\ell\}.$ Consider the following operations:
\begin{enumerate}
\item[1.] Let $i$ be the smallest index such that $v_i$ has degree $\geq 3.$ If $i<\frac{\ell}{2}$, we remove all edges of the form $(v_i,u)$ for $u\in V'$ and add edges $(u,v_{i+1}).$
\item[2.] Repeat step 1 until no further operations can be performed.
\item[3.] Let $j$ be the largest index such that $v_j$ has degree $\geq 3.$ If $j>\frac{\ell}{2}$, we remove all edges of the form $(v_j,u)$ for $u\in V'$ and add edges $(u,v_{j-1}).$
\item[4.] Repeat step 3 until no further operations can be performed.
\end{enumerate}

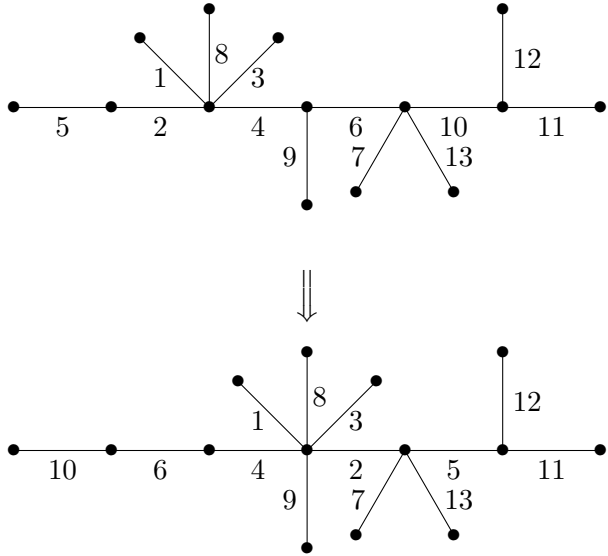
\begin{figure}[h]
\begin{tikzpicture}[scale=1.3]
\draw(1,0)--(7,0);
\draw(2.293,0.707)--(3,0)--(3.707,0.707);
\draw(3,0)--(3,1);
\draw(4,-1)--(4,0);
\draw(4.5,-0.866)--(5,0)--(5.5,-0.866);
\draw(6,0)--(6,1);
\node at (1,0){$\bullet$};
\node at (2,0){$\bullet$};
\node at (3,0){$\bullet$};
\node at (4,0){$\bullet$};
\node at (5,0){$\bullet$};
\node at (6,0){$\bullet$};
\node at (7,0){$\bullet$};
\node at (2.293,0.707){$\bullet$};
\node at (3,1){$\bullet$};
\node at (3.707,0.707){$\bullet$};
\node at (4,-1){$\bullet$};
\node at (4.5,-0.866){$\bullet$};
\node at (5.5,-0.866){$\bullet$};
\node at (6,1){$\bullet$};
\draw (1.5,0) node[below]{5};
\draw (2.5,0) node[below]{2};
\draw (3.5,0) node[below]{4};
\draw (4.5,0) node[below]{6};
\draw (5.5,0) node[below]{10};
\draw (6.5,0) node[below]{11};
\draw (2.5,0.5) node[below]{1};
\draw (2.95,0.55) node[right]{8};
\draw (3.5,0.5) node[below]{3};
\draw (4,-0.5) node[left]{9};
\draw (4.7,-0.5) node[left]{7};
\draw (5.3,-0.5) node[right]{13};
\draw (6,0.5) node[right]{12};
\end{tikzpicture}

$$\Big\Downarrow$$

\begin{tikzpicture}[scale=1.3]
\draw(1,0)--(7,0);
\draw(3.293,0.707)--(4,0)--(4.707,0.707);
\draw(4,0)--(4,1);
\draw(4,-1)--(4,0);
\draw(4.5,-0.866)--(5,0)--(5.5,-0.866);
\draw(6,0)--(6,1);
\node at (1,0){$\bullet$};
\node at (2,0){$\bullet$};
\node at (3,0){$\bullet$};
\node at (4,0){$\bullet$};
\node at (5,0){$\bullet$};
\node at (6,0){$\bullet$};
\node at (7,0){$\bullet$};
\node at (3.293,0.707){$\bullet$};
\node at (4,1){$\bullet$};
\node at (4.707,0.707){$\bullet$};
\node at (4,-1){$\bullet$};
\node at (4.5,-0.866){$\bullet$};
\node at (5.5,-0.866){$\bullet$};
\node at (6,1){$\bullet$};
\draw (1.5,0) node[below]{10};
\draw (2.5,0) node[below]{6};
\draw (3.5,0) node[below]{4};
\draw (4.5,0) node[below]{2};
\draw (5.5,0) node[below]{5};
\draw (6.5,0) node[below]{11};
\draw (3.5,0.5) node[below]{1};
\draw (3.95,0.55) node[right]{8};
\draw (4.5,0.5) node[below]{3};
\draw (4,-0.5) node[left]{9};
\draw (4.7,-0.5) node[left]{7};
\draw (5.3,-0.5) node[right]{13};
\draw (6,0.5) node[right]{12};

\end{tikzpicture}
\caption{An example of operation on $T$: moving edges towards middle. The shellings are indicated by the number on the edges. $g$ maps a shelling of the first tree to a shelling of the second tree.}
\label{fig:theoremUpperBound}
\end{figure}

Suppose the above operations end in step $M$. Let $T^{(t)}$ be the tree after $t^{\text{th}}$ operation. We claim that for all $t<M$,
\begin{equation*}
F(T^{(t+1)}) \geq F(T^{(t)}).
\end{equation*}
It suffices to prove the case when $t = 0.$ By symmetry, we can assume $i<\frac{\ell}{2}$. Let $V_i$ be the set of vertices adjacent to $v_i$ in $T$ except $v_{i-1}, v_{i+1}$. 
Define 
\begin{align*}
S_{T\cap T^{(1)}} &:= \{\sigma\text{ is a shelling of }T: \forall u\in V_i, (v_i,u) \text{ appears after }(v_i,v_{i+1}) \text{ in } \sigma\}, \\
S_{T\cap T^{(1)}}^{(1)} &:= \{\tau\text{ is a shelling of }T^{(1)}: \forall u\in V_i, (v_{i+1},u) \text{ appears after }(v_i,v_{i+1}) \text{ in } \tau\}, \\
S_{T\setminus T^{(1)}} &:= \{\sigma\text{ is a shelling of }T: \exists u\in V_i, (v_i,u) \text{ appears before }(v_i,v_{i+1}) \text{ in } \sigma\},\\
S_{T^{(1)}\setminus T}^{(1)} &:= \{\tau\text{ is a shelling of }T^{(1)}: \exists u\in V_i, (v_{i+1},u) \text{ appears before }(v_i,v_{i+1}) \text{ in } \tau\}.
\end{align*}

Note that there is a bijection between $S_{T\cap T^{(1)}}$ and $S_{T\cap T^{(1)}}^{(1)}$ by replacing edges of the form $(v_i,u)$ in every $\sigma\in S_{T\cap T^{(1)}}$ with $(v_{i+1},u)$, for all $u\in V_i$. Thus, $|S_{T\cap T^{(1)}}|=|S_{T\cap T^{(1)}}^{(1)}|$ and
$$F(T^{(1)}) - F(T) = |S_{T^{(1)}\setminus T}^{(1)}| - |S_{T\setminus T^{(1)}}|.$$
Define a function $g: S_{T\setminus T^{(1)}} \rightarrow S_{T^{(1)}\setminus T}^{(1)}$ as follows. Suppose $\sigma \in S_{T\setminus T^{(1)}}$ and denote $\tau = g(\sigma)$. If $\sigma(k) = (v_i, u)$ for some $u\in V_i$, $\tau(k) = (v_{i+1}, u)$; if $\sigma(k) = (v_j,u)$ for some $j\neq i$ and $u\in V'$, set $\tau(k) = (v_j,u)$. It remains to define $\tau(k)$'s where $\sigma(k)$ is an edge of $L$.

Write $e(j) = (v_{j},v_{j+1})$ for $j=0,1,\ldots,\ell-1.$ For each $r$ with $1\leq r\leq \ell$, suppose $\sigma(k_r) = e(j_r)$ where $k_1<k_2<\cdots < k_\ell.$ Define $\tau(k_r)$ inductively: when $r=1$, $\tau(k_1) = e(2i-j_1).$ When $r\geq 2$,
$$\tau(k_{r}) = \begin{cases}
e(j_{r}), & \text{ if } \{\tau(k_1), \tau(k_2),\ldots, \tau(k_{r-1})\} =\{e(j_1), e(j_2),\ldots, e(j_{r-1})\}  \\
e(2i-j_{r}), & \text{ otherwise.}
\end{cases}$$
The idea is that $g$ maps an edge not in $L$ to itself or to the corresponding edge in $T^{(1)}\setminus T$ (in the case that this edge is incident to $v_i$ in $T$). For edges in $L$, $g$
acts as a reflection with respect to $e(i)$, until the reflection image matches with the preimage. An example of $g$ is in Figure~\ref{fig:theoremUpperBound}.

We check the following properties of $g$:
\begin{itemize}
\item $g$ is well-defined. 

We first note that for any $r\leq \ell$, both $\{\sigma(k_1),\sigma(k_2),\ldots,\sigma(k_r)\}$ and $\{\tau(k_1),\tau(k_2),\ldots,\tau(k_r)\}$ form a path $P_r$ and $P^{(1)}_r$ in $L$, respectively. Since $j_1 \leq i$, the right endpoint of $P^{(1)}_r$ is never on the left side of the right endpoint of $P_r$ (assuming that $L$ is a horizontal path with left endpoint $v_0$ and right endpoint $v_\ell$, as illustrated in Figure~\ref{fig:theoremUpperBound}). Furthermore, since the ``branching edges" of $T$ (edges in $E(T)\setminus E(L)$) are not on the left side of $v_i$, every branching edge adjacent to $P_r$ must be adjacent to $P^{(1)}_r$. Thus, $\tau$ is a shelling of $T^{(1)}$. Moreover, $\tau\in S_{T^{(1)}\setminus T}^{(1)}$ by the correspondence between $(v_i,u)\in \sigma$ and $(v_{i+1},u)\in \tau$ for all $u\in V_i$. Therefore, $g$ is well-defined.

\item $g$ is injective.

Suppose $g(\sigma) = \tau.$ By the definition of $g$, $\sigma(k)$ is uniquely determined whenever $\tau(k) \not\in L.$ Suppose $\tau(k_r) = e(i_r)$ for $1\leq r\leq \ell$. we can recover $\sigma(k_r)$ from $\tau$: $\sigma(k_1) = e(2i-i_1)$. When $r\geq 2$,
$$\sigma(k_{r}) = \begin{cases}
e(i_{r}), & \text{ if } \{\sigma(k_1), \sigma(k_2),\ldots, \sigma(k_{r-1})\} =\{e(i_1), e(i_2),\ldots, e(i_{r-1})\}  \\
e(2i-i_{r}), & \text{ otherwise.}
\end{cases}$$
Therefore, $\sigma$ is uniquely determined by $\tau$ and $g$ is injective.
\end{itemize}

Since $g$ is injective, $|S_{T^{(1)}\setminus T}^{(1)}| \geq |S_{T\setminus T^{(1)}}|$ and thus $F(T^{(1)}) \geq F(T).$

Finally, note that $T^{(M)}$ is the tree where all edges not in $L$ are incident to $v_{\lfloor\frac{\ell}{2}\rfloor}$. By Proposition~\ref{prop:rootedTreeCounting} and ~\ref{prop:treeCounting},
$$F(T^{(M)})= 
\begin{cases}
\frac{2(n-1-\frac{\ell}{2})!}{(\frac{\ell}{2})!}\bigg[\binom{n-2}{\frac{\ell}{2}}+\sum_{i=0}^{\frac{\ell}{2}-1}\binom{n-1}{i}\bigg], &\text{ if }\ell \text{ is even,} \\
\frac{(n-\frac{\ell+3}{2})!}{(\frac{\ell+1}{2})!}\bigg[(n-1-\ell)\binom{n-2}{\frac{\ell-1}{2}}+n\sum_{i=0}^{\frac{\ell-1}{2}}\binom{n-1}{i}\bigg], &\text{ if } \ell \text{ is odd.}
\end{cases}
$$
Thus, the proof of inequality is complete.

Futhermore, we shall prove that $g$ is surjective only if $T$ is isomorphic to $T^{(M)}.$ If not, then there are two cases:

\noindent \textbf{Case 1.} $i< \frac{\ell-1}{2}$.

In this case, $2i<\ell-1$. Thus, for every $\sigma \in S_{T\setminus T^{(1)}}$, $g(\sigma)(1)\neq e(\ell-1) = (v_{\ell-1},v_\ell)$. However, there exists $\tau\in S_{T^{(1)}\setminus T}^{(1)}$ whose first edge is $(v_{\ell-1},v_{\ell})$, contradiction!

\noindent \textbf{Case 2.} $i= \frac{\ell-1}{2}$ and there exists another vertex $v_j$ of degree at least 3.

Suppose $(v_j, u)$ is an edge not in $L$, then for every $\sigma \in S_{T\setminus T^{(1)}}$, $g(\sigma)(1)$ cannot be this edge. However, there exists $\tau\in S_{T^{(1)}\setminus T}^{(1)}$ whose first edge is $(v_j,u)$, contradiction!

Therefore, $g$ is surjective only if $T$ is isomorphic to $T^{(M)}$, so $$F(T) = F(T^{(M)})$$ if and only if $T$ is isormorphic to $T^{(M)}$. This completes the proof of Theorem~\ref{thm:upperBoundDiameter}.
\end{proof}

\section*{acknowledgements}
This research was carried out as part of the 2018 Summer Program in Undergraduate Research (SPUR) of the MIT Mathematics Department. The authors would like to thank Prof.\,Richard Stanley for suggesting the project and Prof.\,Alex Postnikov, Prof.\,Ankur Moitra and Prof.\,Davesh Maulik for helpful conversations. The authors also thank Christian Gaetz and the anonymous referee for pointing out the connection between the shelling number of trees and the number of linear extensions of tree posets.

\appendix
\section{Some Combinatorial Inequalities}
\label{app:inequalities}
\begin{lemma}\label{app:ineq1}
Let $s_1\leq s_2 \leq \cdots \leq s_{d-1}$ and $d\geq 3$ be some positive integers. Then
$$\binom{s_1+s_2+\cdots+s_{d-1}}{s_1} < 2^{s_1-1}d$$
if and only if $s_1=s_2=\cdots = s_{d-1}= 1.$
\end{lemma}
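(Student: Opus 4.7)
The plan is to handle the two directions separately, using the monotonicity $\binom{S}{s_1}\ge\binom{(d-1)s_1}{s_1}$ (valid because $S\ge (d-1)s_1$, since each $s_i\ge s_1$) to reduce most of the work to a single auxiliary inequality. The $\Leftarrow$ direction I would dispatch by direct substitution: when $s_1=\cdots=s_{d-1}=1$, the left-hand side becomes $\binom{d-1}{1}=d-1<d=2^{0}d$, as required.

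For $\Rightarrow$, I would prove the contrapositive: if the tuple is not all ones, then $\binom{S}{s_1}\ge 2^{s_1-1}d$ where $S=s_1+\cdots+s_{d-1}$. I would then split on $s_1$. If $s_1=1$, the non-triviality assumption forces $s_{d-1}\ge 2$, so $S\ge(d-2)+2=d$ and $\binom{S}{1}=S\ge d$ immediately. If $s_1\ge 2$, then $S\ge (d-1)s_1$, and the monotonicity above reduces matters to proving
\begin{equation*}
\binom{(d-1)k}{k}\ \ge\ 2^{k-1}d\qquad\text{for all } d\ge 3,\ k\ge 2.
\end{equation*}

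I would prove this auxiliary claim by induction on $k$. The base case $k=2$ is the polynomial inequality $(d-1)(2d-3)\ge 2d$, which factors cleanly as $(2d-1)(d-3)\ge 0$ and holds for $d\ge 3$, with equality at $d=3$. For the inductive step, pairing factorials yields
\begin{equation*}
\frac{\binom{(d-1)(k+1)}{k+1}}{\binom{(d-1)k}{k}}\ =\ (d-1)\prod_{i=1}^{d-2}\frac{(d-1)k+i}{(d-2)k+i}\ \ge\ d-1\ \ge\ 2,
\end{equation*}
since every factor in the product exceeds $1$ and $d\ge 3$. This factor of $2$ is exactly what is needed to step from $2^{k-1}d$ to $2^{k}d$, so the induction closes.

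The only bookkeeping subtlety is that equality in the auxiliary claim is attained at $(k,d)=(2,3)$, corresponding to the tuple $(s_1,s_2)=(2,2)$; this produces equality rather than strict $<$ in the original statement, which is precisely what the biconditional with ``all $s_i=1$'' demands. I do not anticipate a real obstacle: the ratio computation in the inductive step is the only non-trivial line, and it drops out cleanly once the factorials are telescoped and one $k+1$ is cancelled.
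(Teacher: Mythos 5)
Your proof is correct and follows essentially the same route as the paper: both reduce via $\binom{s_1+\cdots+s_{d-1}}{s_1}\ge\binom{(d-1)s_1}{s_1}$ to the auxiliary inequality $\binom{(d-1)s_1}{s_1}\ge 2^{s_1-1}d$ for $s_1\ge 2$, and both settle the remaining $s_1=1$ case by the trivial bound $\binom{S}{1}=S$. The only difference is cosmetic: the paper proves the auxiliary claim by induction on $d$ (gaining a factor $\tfrac{d}{d-1}$ per step from the telescoped ratio), while you induct on $k=s_1$ (gaining a factor $d-1\ge 2$), and both handle the boundary equality cases correctly.
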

\begin{proof}
Note that $s_1+s_2+\cdots + s_{d-1} \geq (d-1)s_1$, so $$\binom{s_1+s_2+\cdots+s_{d-1}}{s_1}\geq \binom{(d-1)s_1}{s_1}.$$
We claim that when $s_1\geq 2,$
$$\binom{(d-1)s_1}{s_1} \geq 2^{s_1-1}d.$$
Induct on $d$. When $d = 3$,
$$\binom{2s_1}{s_1} = \prod_{k=1}^{s_1}\frac{s_1+k}{k}\geq (s_1+1)\prod_{k=2}^{s_1}\frac{s_1+k}{k} \geq 3\cdot 2^{s_1-1}.$$
Suppose that the claim holds for $d-1$, then
\begin{align*}
\binom{ds_1}{s_1} = \binom{(d-1)s_1}{s_1}\prod_{k=1}^{s_1}\frac{(d-1)s_1+k}{(d-2)s_1+k} \geq (2^{s_1-1}d)\cdot \frac{d}{d-1} \geq 2^{s_1-1}(d+1),
\end{align*}
so the claim is proved by induction.

According to this claim, if $s_1\geq 2$,
$$\binom{s_1+s_2+\cdots+s_{d-1}}{s_1}\geq 2^{s_1-1}d,$$
contradiction! So $s_1 = 1$ and $s_1+s_2+\cdots + s_{d-1} < d$. This gives $s_1=s_2=\cdots = s_{d-1} = 1.$
\end{proof}

\begin{lemma}\label{app:ineq2}
Suppose $2\leq d_1\leq d_2$ are positive integers, then
$$2\cdot (d_1+d_2-2)! \geq d_1!d_2!.$$
\end{lemma}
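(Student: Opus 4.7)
The plan is to rewrite the inequality in binomial form and induct on $d_1$. Dividing both sides of $2(d_1+d_2-2)! \ge d_1!\,d_2!$ by $(d_1-1)!(d_2-1)!$ turns it into the equivalent statement
$$2\binom{d_1+d_2-2}{d_1-1} \;\ge\; d_1 d_2.$$
Both sides are symmetric in $d_1, d_2$, so I may assume $d_1\le d_2$. The first thing I would check is the base case $d_1=2$: here the inequality reads $2\binom{d_2}{1}=2d_2 = 2\cdot d_2$, which is in fact an \emph{equality}. This is a good sign, because it means the argument needs no slack at the bottom and a multiplicative ratio argument should close the induction cleanly.

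For the inductive step, assuming the statement for $d_1-1$ and all parameters $d_2'\ge d_1-1$ (applicable since $d_2\ge d_1\ge d_1-1$), I would use the standard identity
$$\binom{d_1+d_2-2}{d_1-1} \;=\; \frac{d_1+d_2-2}{d_1-1}\binom{d_1+d_2-3}{d_1-2}.$$
Applying the induction hypothesis with parameters $(d_1-1, d_2)$ yields $2\binom{d_1+d_2-3}{d_1-2}\ge (d_1-1)d_2$, so
$$2\binom{d_1+d_2-2}{d_1-1} \;\ge\; \frac{d_1+d_2-2}{d_1-1}(d_1-1)d_2 \;=\; (d_1+d_2-2)d_2.$$
It remains to observe that $(d_1+d_2-2)d_2 \ge d_1 d_2$ is equivalent to $d_2\ge 2$, which holds by hypothesis. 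This completes the induction.

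There is essentially no real obstacle here; the main thing to get right is to choose the induction variable so that the base case exactly matches and the single-step multiplicative growth factor $\tfrac{d_1+d_2-2}{d_1-1}$ dominates the corresponding factor $\tfrac{d_1}{d_1-1}$ on the right side. Inducting on $d_1$ (with $d_2$ free, subject to $d_2 \ge d_1$) accomplishes exactly this. A secondary sanity check I would run is the small cases $(d_1,d_2)\in\{(2,2),(2,3),(2,4),(3,3)\}$ to confirm the equality locus: the first three give equality and the last is strict, consistent with the equality analysis used in the proof of Theorem~\ref{thm:lowerBoundDegree}.
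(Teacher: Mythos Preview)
Your proof is correct. The paper's argument is the same idea unwound without induction: it writes $(d_1+d_2-2)!/d_2! = \prod_{k=1}^{d_1-2}(d_2+k)$ and bounds each factor below by $2+k$ using $d_2\ge 2$, which is precisely your inductive step applied $d_1-2$ times in one line.
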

\begin{proof}
Note that 
$$\frac{(d_1+d_2-2)!}{d_2!} = \prod_{k=1}^{d_1-2}(d_2+k) \geq \prod_{k=1}^{d_1-2}(2+k) = \frac{d_1!}{2},$$
so the lemma follows immediately.
\end{proof}

\begin{lemma}\label{app:ineq3}
Suppose $2\leq d_1\leq d_2$ are positive integers, then
$$\binom{d_1+d_2}{d_1}\leq 2d_1d_2$$
if and only if $d_1=2$ and $d_2\leq 4$.
\end{lemma}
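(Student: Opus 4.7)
The plan is to prove both directions separately. The ``if'' direction is a finite check: for each of the three candidate pairs one computes $\binom{4}{2}=6\leq 8$, $\binom{5}{2}=10\leq 12$, $\binom{6}{2}=15\leq 16$.

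For the ``only if'' direction, I establish the strict inequality $\binom{d_1+d_2}{d_1}>2d_1d_2$ at every pair with $2\leq d_1\leq d_2$ other than the three listed. The central tool is a monotonicity lemma: if $d_1\geq 2$, $d_2\geq 1$, and $\binom{d_1+d_2}{d_1}>2d_1d_2$, then $\binom{d_1+d_2+1}{d_1}>2d_1(d_2+1)$. To see this, note that the left side scales by $\frac{d_1+d_2+1}{d_2+1}$ and the right side by $\frac{d_2+1}{d_2}$ under $d_2\mapsto d_2+1$, so it suffices to verify
\[
\frac{d_1+d_2+1}{d_2+1}\;\geq\;\frac{d_2+1}{d_2},
\]
which cross-multiplies to $d_2(d_1-1)\geq 1$ and holds under the hypothesis. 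By the symmetry $\binom{d_1+d_2}{d_1}=\binom{d_1+d_2}{d_2}$, the analogous monotonicity in the first coordinate holds as well.

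With this lemma in hand I verify two base cases directly: $\binom{7}{2}=21>20$ at $(2,5)$, and $\binom{6}{3}=20>18$ at $(3,3)$. Iterating the monotonicity lemma from $(2,5)$ in the second coordinate propagates strict inequality to every $(2,d_2)$ with $d_2\geq 5$. Starting from $(3,3)$ and iterating in each coordinate in turn covers every $(d_1,d_2)$ with $d_1,d_2\geq 3$. Together with the three excluded pairs from the forward check, this exhausts all pairs with $2\leq d_1\leq d_2$.

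There is no real obstacle beyond pinning down the exact boundary: the inequality fails only on a small finite set, and the substance is simply that $\binom{d_1+d_2}{d_1}$ eventually outgrows $2d_1d_2$, which is captured cleanly by $d_2(d_1-1)\geq 1$.
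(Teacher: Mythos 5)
Your proof is correct, and it is organized a bit differently from the paper's. The paper proves by induction on $d_1$ that $\binom{d_1+d_2}{d_1}>2d_1d_2$ for all $d_1\geq 3$, handling the base case $d_1=3$ for every $d_2$ by an explicit cubic estimate, and then disposes of $d_1=2$ separately by solving the quadratic inequality $d_2^2-5d_2+2\leq 0$ coming from $\binom{2+d_2}{2}\leq 4d_2$. You instead prove a single one-step monotonicity statement in $d_2$ (the ratio comparison $\frac{d_1+d_2+1}{d_2+1}\geq\frac{d_2+1}{d_2}$, equivalent to $d_2(d_1-1)\geq 1$), transfer it to the first coordinate by the symmetry $\binom{d_1+d_2}{d_1}=\binom{d_1+d_2}{d_2}$, and propagate strictness from the two numerical anchors $(2,5)$ and $(3,3)$; this covers the regions $d_1=2,\ d_2\geq 5$ and $d_1,d_2\geq 3$ uniformly, replacing the paper's cubic and quadratic manipulations by two small checks, and you also verify the ``if'' direction explicitly, which the paper leaves to the case analysis in the proof of its Theorem 3.6. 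Both arguments are elementary inductions of comparable length; yours trades the paper's closed-form estimates for a cleaner, coordinate-symmetric step, while the paper's base-case computation gives slightly more explicit quantitative information (e.g.\ the bound $d_2+7$ at $d_1=3$).
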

\begin{proof}
We claim that when $d_1 \geq 3,$
$$\binom{d_1+d_2}{d_1}> 2d_1d_2.$$

Induct on $d_1$. When $d_1 = 3$, 
$$\binom{d_1+d_2}{d_1}-2d_1d_2 = \frac{(d_2+3)(d_2+2)(d_2+1)}{6}-6d_2 = f(d_2).$$
If $d_2 = 3$, $f(d_2) = 2 > 0$.
If $d_2 \geq 4,$
$$f(d_2) \geq \frac{(4+3)(4+2)(d_2+1)}{6}-6d_2 = d_2+7>0.$$
Suppose that the claim holds for $d_1-1$, then
$$\binom{d_1+d_2}{d_1} = \frac{d_1+d_2}{d_1} \binom{d_1+d_2-1}{d_1-1} >  \frac{d_1+d_2}{d_1} 2(d_1-1)d_2 > 2d_1d_2,$$
so the induction is complete.

According to this claim, $d_1 = 2$ and
$$\binom{2+d_2}{2}>4d_2.$$ This implies $$d_2^2-5d_2 + 2 \leq 0$$
and thus $d_2 \leq 4$.
\end{proof}

%
%


\end{document}